\let\pa\partial  
\let\na\nabla  
\let\eps\varepsilon  
\newcommand{\N}{{\mathbb N}}  
\newcommand{\R}{{\mathbb R}}
\newcommand{\T}{{\mathbb T}}
\newcommand{\ent}{{\mathcal H}}
\newtheorem{theorem}{Theorem}   
\newtheorem{lemma}[theorem]{Lemma}   
\newtheorem{proposition}[theorem]{Proposition}   
\newtheorem{remark}[theorem]{Remark}
\begin{document}  

\title[A discrete Bakry-Emery method]{A discrete Bakry-Emery
method and its application to the porous-medium equation}

\author{Ansgar J\"ungel}
\address{Institute for Analysis and Scientific Computing, Vienna University of  
	Technology, Wiedner Hauptstra\ss e 8--10, 1040 Wien, Austria}
\email{juengel@tuwien.ac.at} 
\author{Stefan Schuchnigg}
\address{Institute for Analysis and Scientific Computing, Vienna University of  
	Technology, Wiedner Hauptstra\ss e 8--10, 1040 Wien, Austria}
\email{stefan.schuchnigg@tuwien.ac.at} 

\date{\today}

\thanks{The authors acknowledge partial support from   
the Austrian Science Fund (FWF), grants P22108, P24304, and W1245, and    
the Austrian-French Program of the Austrian Exchange Service (\"OAD)} 

\begin{abstract}
The exponential decay of the relative entropy associated to a fully discrete 
porous-medium equation in one space dimension
is shown by means of a discrete Bakry-Emery approach. 
The first ingredient of the proof is an abstract discrete Bakry-Emery method, 
which states conditions on a sequence under which the exponential decay of the 
discrete entropy follows. The second ingredient is a new nonlinear
summation-by-parts formula which is inspired by systematic integration by parts
developed by Matthes and the first author. Numerical simulations illustrate the
exponential decay of the entropy for various time and space step sizes.
\end{abstract}

\keywords{Finite differences, Bakry-Emery method, large-time asymptotics,
systematic integration by parts.}  
 
\subjclass[2000]{65J08, 65M06, 65M12, 65Q10.}  

\maketitle


\section{Introduction}

The Bakry-Emery method allows one to establish convex Sobolev
inequalities and to compute exponential decay rates towards equilibrium 
for solutions to diffusion equations \cite{BaEm85,BGL14}. 
The key idea of Bakry and Emery
is to differentiate a so-called entropy functional twice with respect to time 
and to relate the second-order derivative to the entropy production. 
Our aim is to develop a discrete version of this
technique, and in this paper, we present a step forward in this direction.

The study of discrete Bakry-Emery methods and related topics is rather recent. 
Caputo et al.\ \cite{CDP09} computed
exponential decay rates for time-continuous Markov processes, using the
Bochner-Bakry-Emery method. Given a stochastic process with density $u(t)$
and the entropy functional $H_c(u(t))$, the core of the Bakry-Emery approach is to 
find a constant $\lambda>0$ such that the inequality $d^2H_c/dt^2\ge -\lambda dH_c/dt$
holds for all time. Integrating this inequality, one may show that
$dH_c/dt\le -\lambda H_c$ which implies that $H_c(u(t))\le e^{-\lambda t}H_c(u(0))$ 
for all $t>0$, i.e., the entropy decays exponentially fast along $u(t)$.
The relation between $d^2H_c/dt^2$ and $dH_c/dt$ is achieved in \cite{CDP09}
by employing a discrete Bochner-type identity
which replaces the Bochner identity of the continuous case.
The Bochner-Bakry-Emery method was extended by Fathi and Maas in 
\cite{FaMa16} in the context of Ricci curvature bounds and used by the authors of
\cite{JuYu16} to derive discrete Beckner inequalities. 

Another approach has been suggested by Mielke \cite{Mie13}. He investigated
geodesic convexity properties of nonlocal transportation distances on probability
spaces such that continuous-time Markov chains can be formulated as gradient flows.
Very related results have been obtained independently by Chow et al.\ \cite{CHLZ12} 
and Maas \cite{Maa11}. 
The geodesic convexity property implies exponential decay rates \cite{AGS05}. 
Mielke showed that the inequality 
$d^2H_c/dt^2\ge -\lambda dH_c/dt$ is equivalent to the positive semi-definiteness of 
a certain matrix such that matrix algebra can be applied. This idea was extended
recently to certain nonlinear Fokker-Planck equations \cite{CJSa16}. 

All these examples involve spatial semi-discretizations of diffusion
equations. Temporal semi-discretizations often employ the implicit Euler scheme
since it gives entropy dissipation, $dH_c/dt\le 0$, under rather general conditions;
see, e.g., the implicit Euler finite-volume approximations in 
\cite{CJS16,Gli08}. Entropy-dissipating higher-order 
semi-discretizations have been analyzed in \cite{Emm09,JuMi15,JuSc16}. 
However, there seem to be no results for fully discrete schemes using the
Bakry-Emery approach. In this paper, we make a first step to fill this gap.

In order to understand the mathematical difficulty in fully discrete schemes,
consider the abstract Cauchy problem
\begin{equation}\label{1.eq}
  \pa_t u + A(u)=0, \quad t>0, \quad u(0)=u^0,
\end{equation}
where $A:D(A)\to X'$ is a (nonlinear) operator defined on its domain 
$D(A)\subset X$ of the Banach space $X$ with dual $X'$. If the dual
product $\langle A(u),H_c'(u)\rangle$ is nonnegative, where 
$H_c'(u)$ is the (Fr\'echet) derivative
of the entropy and $u(t)$ a solution to \eqref{1.eq}, then
$$
  \frac{dH_c}{dt} = \langle\pa_t u,H_c'(u)\rangle 
	= -\langle A(u),H_c'(u)\rangle \le 0,
$$
showing entropy dissipation.
Next, consider the implicit Euler scheme
$$
  \tau^{-1}(u^k - u^{k-1}) + A_h(u^k) = 0, \quad k\in\N,\ \tau>0,
$$
where $u^k$ is an approximation of $u(k\tau)$ and $A_h$ is an approximation
of $A$ still satisfying $\langle A_h(u^k),H'(u^k)\rangle\ge 0$. Here, $H(u^k)$
is the discrete entropy, which is supposed to be convex. Then
entropy dissipation is preserved by the scheme since
\begin{equation}\label{1.ed}
  H(u^k)-H(u^{k-1}) \le \langle u^k-u^{k-1},H'(u^k)\rangle
	= -\tau\langle A_h(u^k),H'(u^k)\rangle \le 0.
\end{equation}
The problem is to estimate
the discrete analog of $d^2H_c/dt^2$. It turns out that the inequality in
\eqref{1.ed} is too weak, we need an equation; see Section \ref{sec.cBE} for
details. We overcome this difficulty by developing two ideas.

The {\em first idea} is to identify the elements which are necessary to build an
abstract discrete Bakry-Emery method. Unlike in the continuous case,
we distinguish between the discrete entropy production 
$P:=-\tau^{-1}(H(u^k)-H(u^{k-1}))$ and the Fisher information
$F:=\langle A_h(u^k),H'(u^k)\rangle$. We explain this difference
in Section \ref{sec.abstract}.

The Bakry-Emery method
relies on an estimate of $\tau^{-1}(F(u^k)-F(u^{k-1}))$, which approximates
$d^2H_c/dt^2$. For this estimate, discrete versions of suitable integrations
by parts and chain rules are necessary. Our {\em second idea} is to ``translate''
a nonlinear integration-by-parts formula to the discrete case, using the
systematic integration by parts method of \cite{JuMa06}. This leads to a new
inequality for numerical three-point schemes as explained next.

Again, consider first the continuous case. We show in Lemma \ref{lem.ineq} that
for all $(A,B)\in R_c:=\{(A,B)\in\R^2:(2A-B-1)(A+B-2)<0\}$ and 
all smooth positive functions $w$,
$$
  \int_\T w_{xx} (w^A)_{xx} w^B dx \ge \kappa_c\int_\T w^{A+B-1}w_{xx}^2 dx,
$$
where the constant $\kappa_c>0$ depends on $A$ and $B$; see \eqref{kappac} below.
The proof is based on systematic integration by parts \cite{JuMa06}. 
The discrete counterpart is the following inequality: For any $0<\eps\le 1$, 
there exists a region $R$ of admissible values $(A,B)$, containing the line $A=1$,
such that for all $w_0,\ldots,w_{N+1}\ge 0$ with $w_N=w_0$, $w_{N+1}=w_1$, 
\begin{align}
  \sum_{i=1}^{N} & (w_{i+1}-2w_i+w_{i-1})(w_{i+1}^A-2w_i^A+w_{i-1}^A)w_i^B 
	\nonumber \\
	&\ge \kappa\sum_{i=1}^{N} \min_{j=i,i\pm 1}w_j^{A+B-1}(w_{i+1}-2w_i+w_{i-1})^2,
	\label{1.ineqd}
\end{align}
where $\kappa=\eps A$; see Lemma \ref{lem.ineqd}. Interestingly, the inequality
is not true for each term but only with the sum.
The admissible set $R$ for \eqref{1.ineqd}
is generally smaller than $R_c$; see Section \ref{sec.ibp}. 
We conjecture that $R=R_c$ for $\kappa=0$.

Inequality \eqref{1.ineqd} is the first nonlinear summation-by-parts
formula derived from a systematic method. We believe that this idea will lead
to a whole family of new finite-difference inequalities useful in numerical analysis,
and we will explore this in a future work.

We apply the abstract discrete Bakry-Emery method in Section \ref{sec.pme}
to an implicit Euler finite-difference approximation of the porous-medium equation
$$
  \pa_t u = (u^\beta)_{xx} \quad\mbox{in }\T,\ t>0, \quad u(0)=u^0\ge 0,
$$
where $\beta>1$ and $\T$ is the one-dimensional torus. We assume, for simplicity,
that $\mbox{meas}(\T)=1$ and identify $\T$ with $[0,1]$. 
The entropy functional is $H_c(u)=\int_\T(u^\alpha-\overline{u}^\alpha)dx/(\alpha-1)$, 
where $\alpha>0$ and $\overline{u}=\int_\T u^0 dx$
is the constant steady state. We show in Proposition \ref{prop.ent} that
$H_c(u(t))$ decays exponentially fast to zero for all $(\alpha,\beta)\in S_c$, where
\begin{equation}\label{1.Sc}
  S_c = \{(\alpha,\beta)\in\R_+^2: \alpha+\beta>1,\ -2<\alpha-\beta<1\},
\end{equation}
with a decay rate depending on $(\alpha,\beta)$ and $\min_\T u^{\beta-1}$.

To overcome the difficulty with the entropy production inequality, 
we introduce the new variable $v=u^\alpha$ and write
the porous-medium equation in the form
\begin{equation}\label{1.eqv}
  \pa_t v = \alpha u^{\alpha-1}\pa_t u 
	= \alpha v^{(\alpha-1)/\alpha}(v^{\beta/\alpha})_{xx}.
\end{equation}
The advantage of this formulation is that the entropy becomes {\em linear}
in the variable $v$. thus avoiding inequality \eqref{1.ed}.

We discretize \eqref{1.eqv} by an implicit Euler finite-difference scheme.
Let $\tau>0$ be the time step, $h>0$ the space step, and let $v_i^k=(u_i^k)^\alpha$ 
be an approximation of $(h^{-1}\int_{(i-1)h}^{ih}u(x,k\tau) dx)^\alpha$, 
$i=1,\ldots,N$. The iterative scheme reads as
\begin{equation}\label{1.scheme}
  v_i^k - v_i^{k-1} = \tau h^{-2}\alpha (v_i^k)^{(\alpha-1)/\alpha}
	\big((v_{i+1}^k)^{\beta/\alpha} - 2(v_i^k)^{\beta/\alpha} 
	+ (v_{i-1}^k)^{\beta/\alpha}\big),
\end{equation}
where $i=1,\ldots,N$, $k\in\N$, and $v_N^k=v_0^k$, $v_{N+1}^k=v_1^k$.
We show in Lemma \ref{lem.ex} the existence of solutions to \eqref{1.scheme}
as well as the preservation of nonnegativity. However, the total mass
$h\sum_{i=1}^N u_i^k = h\sum_{i=1}^N (v_i^k)^{1/\alpha}$ is not conserved,
which is the price that we have to pay for the estimation of the entropy production.
We discuss this point in Section \ref{sec.num}. Our main result reads as follows.

\begin{theorem}\label{thm.main}
Let $v^k=(v_i^k)$ be a 
nonnegative solution to \eqref{1.scheme} and set $u_i^k=(v_i^k)^{1/\alpha}$.
Let $0<\eps<1$. Then there exist a region $S\subset(0,\infty)^2$, 
containing the line $\alpha-\beta=1$, and a number $U>0$
such that all $(\alpha,\beta)\in S$ with $\alpha>1$ and $\beta\ge 1$, it holds that
$$
  \ent(u^k) \le \ent(u^0)e^{-\eta\lambda k\tau}, \quad k\in\N,
$$
where
$$
  \ent(u^k)=\frac{h}{\alpha-1}\sum_{i=1}^N \big((u_i^k)^\alpha-U^\alpha\big)dx
$$
is the discrete (relative) entropy,
$$
  \eta = \frac{\log(1+k\tau)}{k\tau}, \quad
	\lambda = \frac{8\eps(\alpha-1)\beta^2}{C_p(\alpha+\beta-1)^2}
	\min_{i=1,\ldots,N}u_i^{\beta-1}, 
$$
and $C_p=h^2/(4\sin^2(h\pi))\ge 1/(4\pi^2)$ the the discrete Poincar\'e
constant. Moreover, the total mass
$h\sum_{i=1}^N u_i^k$ is increasing in $k$ and converges to $U$ as $k\to\infty$.
\end{theorem}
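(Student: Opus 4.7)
The plan is to apply the abstract discrete Bakry-Emery method of Section \ref{sec.abstract} to the scheme \eqref{1.scheme}, exploiting the fact that in the variable $v_i^k=(u_i^k)^\alpha$ the entropy $\ent(u^k)=\frac{h}{\alpha-1}\sum_i(v_i^k-U^\alpha)$ is linear in $v^k$. Summing \eqref{1.scheme} over $i$ and using the periodicity $v_N^k=v_0^k$, $v_{N+1}^k=v_1^k$ together with a discrete summation by parts gives the exact identity
\begin{equation*}
\ent(u^k)-\ent(u^{k-1})=-\tau P^k,\qquad
P^k=\frac{\alpha}{h(\alpha-1)}\sum_{i=1}^N\bigl((u_{i+1}^k)^{\alpha-1}-(u_i^k)^{\alpha-1}\bigr)\bigl((u_{i+1}^k)^\beta-(u_i^k)^\beta\bigr)\ge 0.
\end{equation*}
Both differences in each summand have the same sign, so $P^k\ge 0$ and the scheme is entropy dissipative with \emph{equality}: the loss encoded in \eqref{1.ed} is avoided precisely because $\ent$ is linear in $v$.

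The central step is to prove a discrete entropy--entropy-production inequality $P^k\ge\lambda\,\ent(u^k)$. I would bring $P^k$ back to its pre-summation-by-parts shape $P^k=-\frac{\alpha}{h(\alpha-1)}\sum_i(u_i^k)^{\alpha-1}\Delta_h(u_i^k)^\beta$ and invoke the nonlinear summation-by-parts Lemma \ref{lem.ineqd} with $w_i=u_i^k$ and the choice of $(A,B)$ dictated by the scheme (essentially $A=\beta$, $B=\alpha-1$) to obtain a bound of the form
\[
P^k\ge c\sum_{i=1}^N\min_{j=i,i\pm1}(u_j^k)^{\alpha+\beta-2}(\Delta_h u_i^k)^2,
\]
with constant $c$ proportional to $\eps\beta$. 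The admissible region $S$ in the theorem is exactly the image of the lemma's region $R$ under the change of variables from $(A,B)$ to $(\alpha,\beta)$, and the line $\alpha-\beta=1$ sits in $S$ as the image of the distinguished line in $R$. I then bound the weight $(u_j^k)^{\alpha+\beta-2}$ from below by $U^{\alpha-1}\min_i(u_i^k)^{\beta-1}$ (using $\alpha>1$), use a discrete chain-rule-type identity to express $\Delta_h u_i^k$ in terms of a discrete derivative of $(u_i^k)^{(\alpha+\beta-1)/2}$ (producing the factor $4/(\alpha+\beta-1)^2$), and close via the discrete Poincar\'e inequality on $\T$ with constant $C_p=h^2/(4\sin^2(h\pi))\ge 1/(4\pi^2)$. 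Matching constants reproduces $\lambda=8\eps(\alpha-1)\beta^2/(C_p(\alpha+\beta-1)^2)\cdot\min_i(u_i^k)^{\beta-1}$.

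Given $P^k\ge\lambda\,\ent(u^k)$, the recursion $\ent(u^k)\le\ent(u^{k-1})/(1+\tau\lambda)$ yields $\ent(u^k)\le\ent(u^0)(1+\tau\lambda)^{-k}$; a Bernoulli-type convexity estimate $(1+\tau\lambda)^k\ge(1+k\tau)^\lambda$ in the relevant parameter range rewrites this as $\ent(u^k)\le\ent(u^0)(1+k\tau)^{-\lambda}=\ent(u^0)e^{-\eta\lambda k\tau}$ with $\eta=\log(1+k\tau)/(k\tau)$, as claimed. For the mass statement, summing \eqref{1.scheme} shows that $h\sum_i u_i^k$ evolves monotonically (driven by a discrete Laplacian), and since $\ent(u^k)\to 0$ forces $u_i^k\to U$ uniformly in $i$, the mass converges monotonically to $U$.

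The main obstacle will be the application of Lemma \ref{lem.ineqd} in the central step. The lemma carries \emph{two} discrete Laplacians $\Delta_h w_i\cdot\Delta_h w_i^A$, while $P^k$ naturally contains only one. Bridging this gap requires a careful rearrangement of $P^k$ -- likely extracting a factor $\Delta_h u_i^k$ from within $\Delta_h(u_i^k)^\beta$ via a discrete product/chain-rule expansion and absorbing the remainder into the weight $(u_i^k)^B$ -- and this rearrangement is what restricts $(\alpha,\beta)$ to the subregion $S$ instead of the larger continuous region $S_c$ of \eqref{1.Sc}.
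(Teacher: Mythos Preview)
Your proposal misidentifies where Lemma \ref{lem.ineqd} enters and thereby skips the actual Bakry--Emery step. The lemma is a discrete version of an inequality for $\int_\T w_{xx}(w^A)_{xx}w^B\,dx$: both sides involve \emph{second} discrete differences. It is the natural tool for estimating the discrete analogue of $d^2H_c/dt^2$, i.e.\ the time-decrement of the Fisher information, not for bounding the entropy production $P^k$, which is a sum of products of \emph{first} differences. Your acknowledged ``obstacle'' that $P^k$ carries only one discrete Laplacian is not a technicality to be fixed by rearrangement; it signals that the lemma simply does not apply to $P^k$. Correspondingly, your choice $w_i=u_i^k$, $(A,B)=(\beta,\alpha-1)$ is off: with that choice the distinguished line $A=1$ would be $\beta=1$, not $\alpha-\beta=1$.

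The paper does not prove $P^k\ge\lambda\,\ent(u^k)$ directly. It introduces the Fisher information $F(v^k)=h^{-1}\sum_i\big((v_{i+1}^k)^\gamma-(v_i^k)^\gamma\big)^2$ with $\gamma=(\alpha+\beta-1)/(2\alpha)$ and verifies the three hypotheses of Proposition~\ref{prop.BE}. Assumption~A1 ($P\sim F$) comes from the elementary two-sided estimate of Lemma~\ref{lem.xy}. Assumption~A2 is the heart of the argument: one writes $F(v^k)-F(v^{k-1})=G(\tau)$, Taylor-expands in $\tau$, shows $G''\le 0$ (this is where $\beta\ge 1$ and $\alpha-\beta\ge -1$, equivalently $1/2\le\gamma\le 1$, are used), and bounds $G'(0)$. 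The quantity $G'(0)$ has exactly the structure $\sum_i w_i^B\,\Delta_h w_i\,\Delta_h w_i^A$ with $w_i=(v_i^k)^\gamma$, $A=2\beta/(\alpha+\beta-1)$, $B=(\alpha+\beta-3)/(\alpha+\beta-1)$, so Lemma~\ref{lem.ineqd} applies (and now $A=1\Leftrightarrow\alpha-\beta=1$, matching the statement). The discrete Poincar\'e inequality then closes A2. Assumption~A3 and the definition of $U$ come from a compactness argument plus Poincar\'e--Wirtinger. The inequality $P^k\ge\lambda\,\ent(u^k)$ is recovered only \emph{a posteriori} inside the proof of Proposition~\ref{prop.BE}, by summing A2 to infinity and invoking A1 and A3; it is not an input. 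Finally, the monotonicity of the mass $h\sum_i u_i^k$ uses the concavity of $z\mapsto z^{1/\alpha}$ applied to the scheme, not ``summing \eqref{1.scheme}'' (which controls $\sum_i v_i^k$, not $\sum_i u_i^k$).
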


\begin{remark}[Exponential versus algebraic decay]\rm
The exponential decay rate depends on the minimum of the solution, 
which is not surprising. Indeed, because of the degeneracy, we cannot generally
expect exponential decay; an example is the Barenblatt solution. 
Algebraic decay rates for implicit Euler finite-volume schemes have been
derived in, e.g., \cite{CJS16}.
When the minimum is positive, the equation is no longer degenerate, 
and exponential decay follows.
\qed
\end{remark}

\begin{remark}[Shannon entropy]\rm
Unfortunately, the theorem does not apply to the Shannon entropy 
$h\sum_i u_i\log u_i$, corresponding to $\alpha\to 1$, since 
$\lambda\to 0$ as $\alpha\to 1$. The reason is that for $\alpha\to 1$,
the entropy production $P$
cannot be bounded from above by the Fisher information $F$ and so, Assumption
A1 of our abstract Bakry-Emery method does not hold; see Section \ref{sec.dBE}.
\qed
\end{remark}

\begin{remark}[Discrete gradient flow]\rm
Erbar and Maas \cite{ErMa14} showed that 
the gradient flow of the Shannon entropy with respect to a nonlocal
transportation measure equals the discrete porous-medium equation in one
space dimension. The porous-medium equation in several space dimensions
was solved by Benamou et al.~ \cite{BCMO16} by providing a spatial discretization 
of this equation as a convex optimization problem. In both references, no
decay rates have been derived.
\qed
\end{remark}

The set $S$ is illustrated in Figure \ref{fig.alphabeta} for two different
values of $\eps$. Numerical computations indicate that 
$S$ converges (in the set theoretical sense) to the set $S_c$ defined in \eqref{1.Sc} 
if $\eps\to 0$ but for fixed $\eps>0$, $S$ is strictly contained in $S_c$. 

\begin{figure}[ht]
\includegraphics[width=80mm]{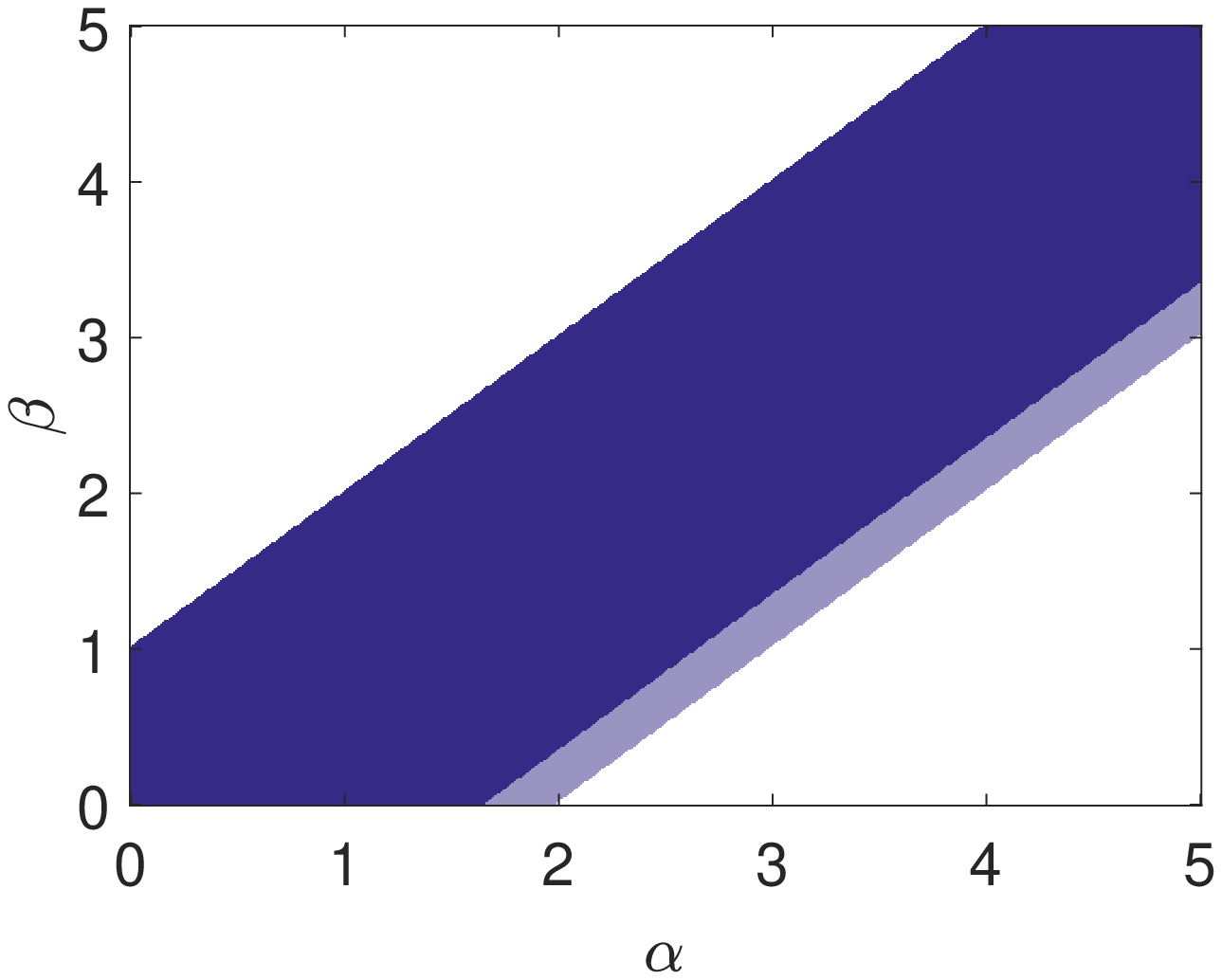}
\includegraphics[width=80mm]{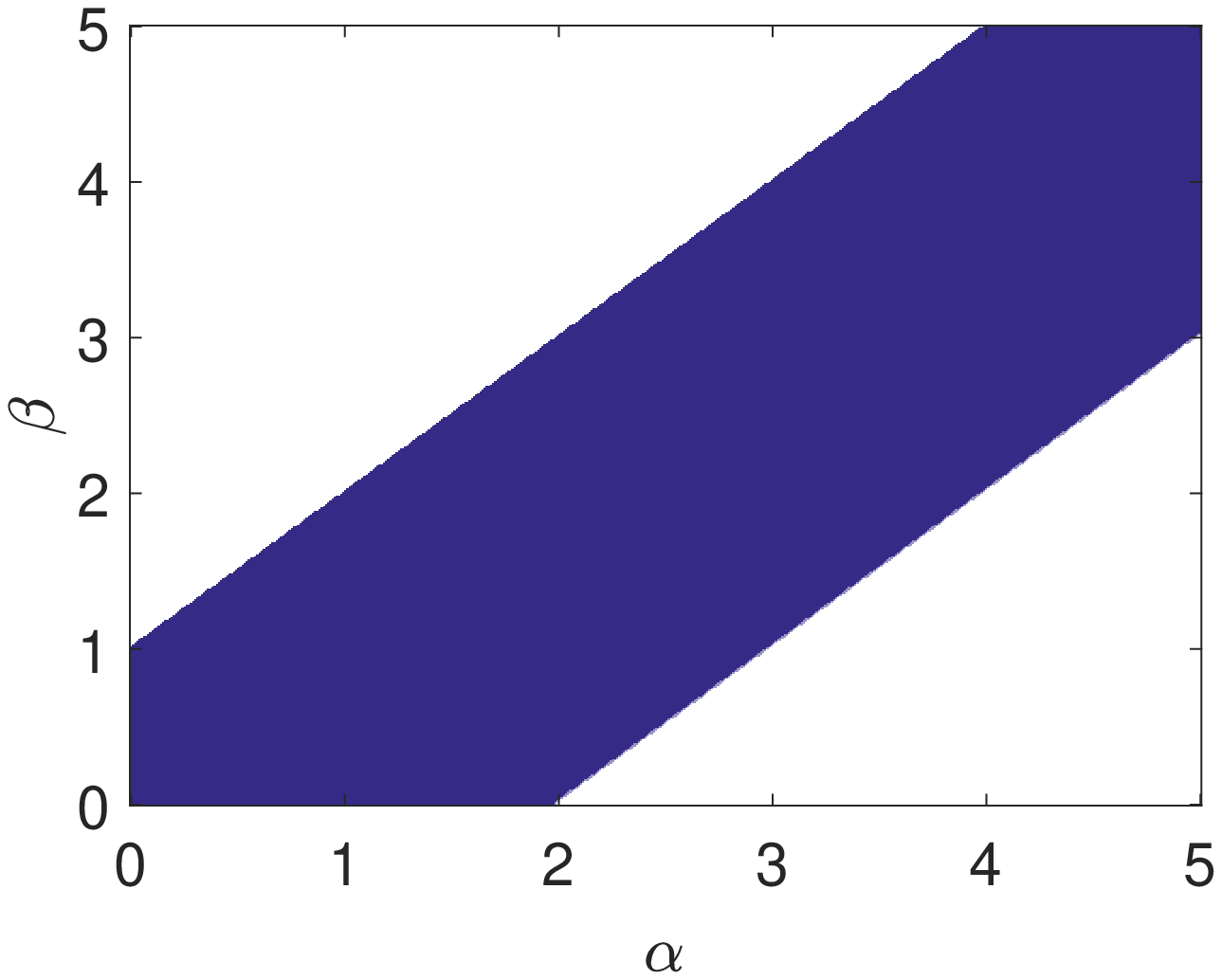}
\caption{Admissible region $S$ for $\eps=1/4$ (left) and $\eps=1/100$ (right).
The set $S_c$, defined by $-1<\alpha-\beta<2$,
is shown in light blue for comparison; it contains the dark blue region $S$.}
\label{fig.alphabeta}
\end{figure}

The paper is organized as follows. The abstract Bakry-Emery result is presented in
Section \ref{sec.abstract}, and in Section \ref{sec.ibp}, inequality
\eqref{1.ineqd} is verified. Theorem \ref{thm.main} is proved
in Section \ref{sec.pme}. Numerical examples are presented in
Section \ref{sec.num}, and some auxiliary inequalities are recalled in
the Appendix.


\section{An abstract Bakry-Emery method}\label{sec.abstract}

In this section, we present our abstract result. In order to identify the
key ingredients of the Bakry-Emery method, we recall the basic ideas for
continuous evolution equations.

\subsection{The continuous Bakry-Emery method}\label{sec.cBE}

Let us first consider the abstract Cauchy problem
\begin{equation}\label{2.eq}
  \pa_t u + A(u) = 0, \quad t>0, \quad u(0)=u^0.
\end{equation}
The nonlinear operator $A$ is defined on some domain $D(A)$ of a Banach space $X$.
We do not specify the properties of $A$ nor its domain since they are not needed
in the following. As mentioned in the introduction, 
the idea of the Bakry-Emery method is to differentiate the entropy
functional $H_c:D(A)\to[0,\infty)$ twice with respect to time along solutions
to \eqref{2.eq}. 
We define the entropy production $P_c(u(t)):=-\frac{d}{dt}H_c(u(t))$.
If $\langle A(u),H_c'(u)\rangle\ge 0$ holds for all $u\in D(A)$
($\langle\cdot,\cdot\rangle$ is the dual product in $X$) then
$$
  P_c(u) = -\langle\pa_t u,H_c'(u)\rangle = \langle A(u),H_c'(u)\rangle \ge 0,
$$
i.e., the entropy production is nonnegative and the entropy is nonincreasing
along solutions to \eqref{2.eq}. We call $F_c(u):=\langle A(u),H_c'(u)\rangle$ 
the generalized Fisher information since if $A(u)=-\Delta u$ on $\T^d$ and 
$H_c(u)=\int_{\T^d} u(\log u-1)dx$, we obtain the Fisher information
functional $F_c(u)=4\int_{\T^d}|\na\sqrt{u}|^2 dx$.
Clearly, $P_c(u(t))=F_c(u(t))$ along solutions $u(t)$ to \eqref{2.eq}.

Differentiating $F_c$ gives
\begin{align*}
  \frac{dF_c}{dt} 
	&= \langle A'(u)[\pa_t u],H_c'(u)\rangle + \langle A(u),H_c''(u)\pa_t u\rangle \\
	&= -\langle A'(u)[A(u)],H_c'(u)\rangle - \langle A(u),H_c''(u)A(u)\rangle,
\end{align*}
where $A'(u)$ is the Fr\'echet derivative of $A$ at $u$. If the functional
inequality
\begin{equation}\label{2.fi}
  \langle A'(u)[A(u)],H_c'(u)\rangle + \langle A(u),H_c''(u)A(u)\rangle
	\ge \lambda_c\langle A(u),H_c'(u)\rangle
\end{equation}
holds for some $\lambda_c>0$ then
\begin{equation}\label{2.fish}
  \frac{dF_c}{dt} \le -\lambda_c\langle A(u),H_c'(u)\rangle = -\lambda_c F_c,
\end{equation}
and we conclude exponential decay of $t\mapsto F_c(u(t))$ with rate $\lambda_c>0$.
In particular, $\lim_{t\to\infty}F_c(u(t))=0$. Then, integrating the previous
inequality over $(t,\infty)$, it follows that
$$
  \frac{dH_c}{dt}(u(t)) = -F_c(u(t))\le -\lambda_c\int_t^\infty F_c(u(s))ds 
	= \lambda_c\int_t^\infty \frac{dH_c}{dt}(u(s))ds.
$$
Assuming that also
\begin{equation}\label{2.H}
  \lim_{t\to\infty}H_c(u(t))=0, 
\end{equation}
we conclude that
$$
  \frac{dH_c}{dt}(u(t)) \le -\lambda_c H_c(u(t)), \quad t>0,
$$
and by Gronwall's lemma, $t\mapsto H_c(u(t))$ converges exponentially fast to zero
with rate $\lambda_c$.

We see that two assumptions are essential: the functional inequality
\eqref{2.fi} and the limit \eqref{2.H}. On the discrete level, we need
to distinguish between the (discrete) entropy production and the (discrete)
Fisher information since $dH_c/dt$ and $\langle A(u),H_c'(u)\rangle$
may differ on the discrete level. 
We assume that both functionals can be estimated
by each other. Instead of the functional inequality \eqref{2.fi} we assume
a discrete version of inequality \eqref{2.fish}. Finally, a discrete version
of \eqref{2.H} is required. 

\subsection{A discrete Bakry-Emery method}\label{sec.dBE}

Let two functions $H:\R^{N}\to[0,\infty)$ and
$F:\R^{N}\to[0,\infty)$ be given and define
$P(v):=P(v;w)=-\tau^{-1}(H(v)-H(w))$, where $v$, $w\in\R^{N}$ and $\tau>0$.
We call $H$ an entropy, $F$ the Fisher information, and $P$ the entropy production.
The following result does not need any reference to the solution of a discrete
problem.

\begin{proposition}\label{prop.BE}
Let $(v^k)\subset\R^N$ be any sequence. We assume that
\begin{description}
\item[\rm A1] There exist $C_m$, $C_M>0$ such that 
$C_m F(v^k)\le P(v^k)\le C_M F(v^k)$ for all $k\in\N$.
\item[\rm A2] 
There exists $\kappa>0$ such that $F(v^k)-F(v^{k-1})\le -\tau\kappa F(v^k)$
for all $k\in\N$.
\item[\rm A3] $\lim_{k\to\infty}H(v^k)=0$.
\end{description}
Then 
$$
  H(v^k) \le e^{-\eta\lambda k\tau} H(v^0), \quad k\in\N,
$$
where $\lambda = (C_m/C_M)\kappa$ and $\eta = \log(1+\tau\lambda)/(\tau\lambda)\in(0,1)$.
\end{proposition}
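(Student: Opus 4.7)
My plan is to mirror the continuous argument from Section \ref{sec.cBE}, with the time integrals replaced by geometric summations and the Gronwall step replaced by an explicit one-step recursion on $H$. First I would use A2 to derive exponential decay of the Fisher information: rewriting the assumption as $(1+\tau\kappa)F(v^k)\le F(v^{k-1})$ and iterating gives
\[
F(v^k) \le (1+\tau\kappa)^{-(k-j)} F(v^j) \quad \text{for all } 0\le j\le k,
\]
the discrete counterpart of \eqref{2.fish}.

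Next I would exploit the definition of $P$ via telescoping: for any $M>k$,
\[
\tau\sum_{j=k+1}^{M} P(v^j) = H(v^k) - H(v^M).
\]
Letting $M\to\infty$ and invoking A3 yields $H(v^k) = \tau\sum_{j=k+1}^{\infty} P(v^j)$. Substituting the upper bound $P(v^j)\le C_M F(v^j)$ from A1 together with the geometric decay from the first step, the resulting tail is a geometric series with ratio $(1+\tau\kappa)^{-1}$, producing
\[
H(v^k) \le C_M\tau F(v^k)\sum_{m=1}^{\infty}(1+\tau\kappa)^{-m} = \frac{C_M}{\kappa}\,F(v^k).
\]
The lower bound $C_m F(v^k)\le P(v^k) = \tau^{-1}(H(v^{k-1})-H(v^k))$ from A1 then gives $F(v^k)\le (H(v^{k-1})-H(v^k))/(C_m\tau)$, and combining the two inequalities produces $(1+\tau\lambda)H(v^k)\le H(v^{k-1})$ with $\lambda=(C_m/C_M)\kappa$. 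Iterating this one-step recursion yields $H(v^k)\le (1+\tau\lambda)^{-k} H(v^0)$, and rewriting $(1+\tau\lambda)^{-k} = \exp(-\eta\lambda k\tau)$ with $\eta=\log(1+\tau\lambda)/(\tau\lambda)$ finishes the proof; the bound $\eta\in(0,1)$ follows from $0<\log(1+x)<x$ for $x>0$.

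I do not expect a genuine obstacle --- once the three assumptions are in place, the argument is essentially bookkeeping. The most delicate point will be the passage $M\to\infty$ in the telescoping identity, which requires A3 to kill the boundary term together with absolute convergence of the tail sum $\tau\sum_{j>k} P(v^j)$. Both are automatic from the geometric decay of $F$ together with the two-sided bound A1, provided $F(v^0)<\infty$; otherwise $H(v^0)=\infty$ and the claim is vacuous. The real substance of the paper is not this abstract proposition but rather verifying A1, A2, and A3 for the discrete porous-medium scheme \eqref{1.scheme}, which is where the nonlinear summation-by-parts inequality \eqref{1.ineqd} will enter.
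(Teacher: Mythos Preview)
Your proposal is correct and follows essentially the same route as the paper: both arguments telescope, pass to the limit using A3 and $F(v^k)\to 0$, arrive at the key estimate $F(v^\ell)\ge(\kappa/C_M)H(v^\ell)$, and then feed this into the lower bound in A1 to obtain the one-step recursion $(1+\tau\lambda)H(v^k)\le H(v^{k-1})$. The only cosmetic difference is that the paper telescopes the combined inequality $F(v^k)-F(v^{k-1})\le\kappa C_M^{-1}(H(v^k)-H(v^{k-1}))$ directly, whereas you first extract the explicit geometric decay of $F$ and then sum the resulting geometric tail; the two computations are equivalent.
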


The discrete decay rate $\lambda$ is generally smaller than the 
decay rate $\kappa$ of the Fisher information,
since $\eta<1$ and we may have $C_m<C_M$. If the entropy production
and the Fisher information coincide, i.e.\ $C_m=C_M=1$, then $\lambda=\kappa$.

\begin{proof}
By Assumption A2, it follows that $\lim_{k\to\infty}F(v^k)=0$.
By Assumption A2 again and the second inequality in Assumption A1, we have
$$
  F(v^k)-F(v^{k-1}) \le -\tau\kappa F(v^k) \le -\tau\kappa C_M^{-1}P(v^k)
	= \kappa C_M^{-1}\big(H(v^k)-H(v^{k-1})\big).
$$
Taking the sum from $k=\ell+1$ to $k=m>\ell+1$, we find that
$$
  F(v^m)-F(v^\ell) \le \kappa C_M^{-1}(H(v^m)-H(v^\ell)).
$$
Passing to the limit $m\to\infty$, observing that $\lim_{m\to\infty}F(v^m)=0$
and, by Assumption A3, $\lim_{m\to\infty}H(v^m)=0$, we deduce that
$$
  F(v^\ell) \ge \kappa C_M^{-1} H(v^\ell),
$$
which holds for all $\ell\in\N$. It remains to use the first inequality in
Assumption A1 to conclude that
$$
  H(v^k)-H(v^{k-1}) = -\tau P(v^k)
	\le -\tau C_m F(v^k) \le -\tau C_m \kappa C_M^{-1} H(v^k) = -\tau\lambda H(v^k).
$$
We deduce that $H(v^k)\le (1+\lambda\tau)^{-k}H(v^0)
=\exp(-\eta\lambda k\tau)H(v^0)$, finishing the proof.
\end{proof}


\section{A nonlinear summation-by-parts formula}\label{sec.ibp}

To apply the abstract Bakry-Emery method to the porous-medium equation,
we need to verify the assumptions of Proposition \ref{prop.BE}. The key
condition is Assumption A2. To verify it, we ``translate'' some
integrations by parts to the discrete level. It is convenient to investigate the
continuous situation first in order to formulate the discrete
formula that is needed to show Assumption A2.

Consider the nonlinear diffusion equation
\begin{equation}\label{3.pme}
  \pa_t u = (u^\beta)_{xx},\quad t>0, \quad u(0)=u^0\ge 0\quad\mbox{in }\T,
\end{equation}
where $\beta>0$, and introduce the (relative) entropy
$$
  H_{c}(u)=\frac{1}{\alpha-1}\int_\T (u^\alpha-\overline{u}^\alpha) dx, \quad
	\alpha>0.
$$
Here, $\overline{u}=\int_\T u^0(x)dx$ is the constant steady state.
(Recall that $\mbox{meas}(\T)=1$.) 

\begin{proposition}\label{prop.ent}
Let $\beta\neq 1$, $\alpha+\beta-1>0$, and $-1<\alpha-\beta<2$. Then, 
for any positive smooth solution to \eqref{3.pme},
$$
  H_c(u(t))\le H_c(u^0)e^{-\lambda_c t}, \quad t>0,
$$
where
$$
	\lambda_c = \frac{16\pi^2\alpha\beta\kappa_c}{\alpha+\beta-1}
	\min_\T u^{\beta-1} \ge 0,
	\quad \kappa_c = -\frac{4\beta(\alpha-\beta-2)}{(\alpha+\beta-1)(\alpha-\beta+1)} >0.
$$
\end{proposition}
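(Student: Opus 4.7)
The plan is to specialize the continuous Bakry--Emery scheme of Section~\ref{sec.cBE} to the operator $A(u)=-(u^\beta)_{xx}$ with entropy $H_c$, using Lemma~\ref{lem.ineq} as the key nonlinear integration-by-parts tool. First I would differentiate $H_c$ along the flow and integrate by parts on $\T$ to compute
\begin{equation*}
F_c(u)=\langle A(u),H_c'(u)\rangle=\alpha\beta\int_\T u^{\alpha+\beta-3}u_x^2\,dx,
\end{equation*}
which coincides with $P_c(u)=-dH_c/dt$. The heart of the proof is then to establish the Bakry--Emery functional inequality $dF_c/dt\le -\lambda_c F_c$.

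To this end, I would differentiate $F_c$ along the PME, substitute $\pa_t u=(u^\beta)_{xx}$, and integrate by parts repeatedly on the torus so that the resulting expression collapses into a single weighted curvature integral of the form $\int_\T u^{\alpha-1}u_{xx}(u^\beta)_{xx}\,dx$. Applying Lemma~\ref{lem.ineq} with $w=u$, $A=\beta$, $B=\alpha-1$ then gives
\begin{equation*}
\int_\T u^{\alpha-1}u_{xx}(u^\beta)_{xx}\,dx\ge \kappa_c\int_\T u^{\alpha+\beta-2}u_{xx}^2\,dx,
\end{equation*}
provided $(A,B)\in R_c$; the hypotheses $\alpha+\beta>1$ and $-1<\alpha-\beta<2$ are exactly the conditions for membership in $R_c$ and for positivity of $\kappa_c$ as defined in the statement.

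To turn the right-hand side back into a multiple of $F_c$, I would pass to the variable $w=u^{(\alpha+\beta-1)/2}$, for which $F_c=(4\alpha\beta/(\alpha+\beta-1)^2)\int_\T w_x^2\,dx$, pull $\min_\T u^{\beta-1}$ out of the weight $u^{\alpha+\beta-2}$, and invoke the periodic Poincar\'e inequality $\int_\T w_{xx}^2\,dx\ge 4\pi^2\int_\T w_x^2\,dx$. Tracking the algebraic prefactors one obtains the stated $\lambda_c=16\pi^2\alpha\beta\kappa_c(\alpha+\beta-1)^{-1}\min_\T u^{\beta-1}$.

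With $dF_c/dt\le -\lambda_c F_c$ in hand, the Bakry--Emery loop closes exactly as in Section~\ref{sec.cBE}: integration of this ODI yields $F_c(u(t))\to 0$, and integrating the identity $-dH_c/dt=F_c$ on $(t,\infty)$ (using $H_c(u(t))\to 0$, which follows from convergence of $u$ to $\overline u$) produces $-dH_c/dt\le -\lambda_c H_c$, so Gronwall gives $H_c(u(t))\le H_c(u^0)e^{-\lambda_c t}$. The step I expect to be the main obstacle is the second one, namely unwinding $dF_c/dt$ so that the surviving integrand lands cleanly on the $u^{\alpha-1}u_{xx}(u^\beta)_{xx}$ template: this is combinatorially delicate, is precisely the setting in which the ``systematic integration by parts'' calculus of \cite{JuMa06} is meant to be applied, and verifying that all cross terms cancel and the constants agree with the closed form of $\kappa_c$ is where the bulk of the algebra sits.
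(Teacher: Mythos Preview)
Your overall architecture (Bakry--Emery loop, Lemma~\ref{lem.ineq} as the key estimate, then Poincar\'e, then Gronwall) matches the paper, but the specific application of Lemma~\ref{lem.ineq} with $w=u$, $A=\beta$, $B=\alpha-1$ does not work, and this is a genuine gap rather than just algebra to be filled in. Three things go wrong simultaneously. First, $dF_c/dt$ does \emph{not} collapse to a multiple of $\int_\T u^{\alpha-1}u_{xx}(u^\beta)_{xx}\,dx$: a direct computation leaves an additional term proportional to $\int_\T u^{\alpha+\beta-4}u_x^2(u^\beta)_{xx}\,dx$, and Lemma~\ref{lem.ineq} (which already has the systematic integration-by-parts built in) gives you no mechanism to absorb it. Second, with your choice of $(A,B)$ the admissibility condition $(2A-B-1)(A+B-2)<0$ reads $(2\beta-\alpha)(\alpha+\beta-3)<0$, which is \emph{not} equivalent to $-1<\alpha-\beta<2$, and the resulting $\kappa_c=-\beta(2\beta-\alpha)/(\alpha+\beta-3)$ is not the constant in the proposition. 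Third, even granting the lemma, its output $\int_\T u^{\alpha+\beta-2}u_{xx}^2\,dx$ does not feed into Poincar\'e to recover $F_c$: pulling out $\min u^{\beta-1}$ leaves $\int u^{\alpha-1}u_{xx}^2$, which is not $\int w_{xx}^2$ for $w=u^{(\alpha+\beta-1)/2}$, since $w_{xx}$ contains a $u_x^2$ contribution.

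The fix, and what the paper does, is to change variables \emph{before} applying the lemma: set $w=u^{(\alpha+\beta-1)/2}$, $A=2\beta/(\alpha+\beta-1)$, $B=(\alpha+\beta-3)/(\alpha+\beta-1)$. Then a single integration by parts gives
\[
\frac{d^2H_c}{dt^2}=\frac{4\alpha\beta}{\alpha+\beta-1}\int_\T (w^A)_{xx}\,w_{xx}\,w^B\,dx
\]
exactly, with no residual terms; the $R_c$ condition becomes precisely $-1<\alpha-\beta<2$; the lemma produces $\kappa_c\int_\T u^{\beta-1}w_{xx}^2\,dx$ with the stated $\kappa_c$; and after extracting $\min_\T u^{\beta-1}$ one has $\int w_{xx}^2$, to which Poincar\'e applies directly to yield $4\pi^2\int w_x^2$, closing back to $F_c$. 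In short, the ``combinatorially delicate'' step you flag is not delicate at all once the right substitution is made---it is the choice $w=u$ that creates the difficulty.
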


\begin{proof}
Integrating by parts, the time derivatives of $H_c(u(t))$ become
\begin{align*}
  \frac{dH_c}{dt}
	&= \frac{\alpha}{\alpha-1}\int_\T u^{\alpha-1}(u^\beta)_{xx} dx
	= -\frac{4\alpha\beta}{(\alpha+\beta-1)^2}
	\int_\T \big(u^{(\alpha+\beta-1)/2}\big)_x^2 dx, \\
	\frac{d^2H_c}{dt^2}
	&= -\frac{8\alpha\beta}{(\alpha+\beta-1)^2}
	\int_\T \big(u^{(\alpha+\beta-1)/2}\big)_x
	\bigg(\frac{\alpha+\beta-1}{2}u^{(\alpha+\beta-3)/2}\pa_t u\bigg)_x dx \\
	&= \frac{4\alpha\beta}{\alpha+\beta-1}\int_\T
  u^{(\alpha+\beta-3)/2}\big(u^{(\alpha+\beta-1)/2}\big)_{xx}(u^\beta)_{xx}dx.
\end{align*}
We wish to estimate the second time derivative. To this end, we set
$w=u^{(\alpha+\beta-1)/2}$, $A=2\beta/(\alpha+\beta-1)$, 
and $B=(\alpha+\beta-3)/(\alpha+\beta-1)$. Then the derivatives can be written as
\begin{equation}
  \frac{dH_c}{dt}
	= -\frac{4\alpha\beta}{(\alpha+\beta-1)^2}\int_\T w_x^2 dx, \quad
	\frac{d^2H_c}{dt^2}
	= \frac{4\alpha\beta}{\alpha+\beta-1}\int_\T (w^{A})_{xx}w_{xx} w^B dx.
	\label{3.dHdt}
\end{equation}

In Lemma \ref{lem.ineq} below we show that there exists $\kappa_c>0$ such that
$$
  \int_\T (w^A)_{xx} w_{xx} w^B dx \ge \kappa_c\int_\T w^{A+B-1}w_{xx}^2 dx
$$
if the assumption $(2A-B-1)(A+B-2)<0$ holds. (Note that $\beta\neq 1$ is equivalent
to $A+B-2\neq 0$.) This condition is actually satisfied since
$$
  (2A-B-1)(A+B-2) = \frac{2(\alpha-\beta-2)(\alpha-\beta+1)}{(\alpha+\beta-1)^2} < 0,
$$
and we infer that
$$
  \frac{d^2H_c}{dt^2}
	\ge \frac{4\alpha\beta\kappa_c}{\alpha+\beta-1}\int_\T w^{A+B-1}w_{xx}^2 dx
	= \frac{4\alpha\beta\kappa_c}{\alpha+\beta-1}\int_\T u^{\beta-1}
	\big(u^{(\alpha+\beta-1)/2}\big)_{xx}^2 dx.
$$
Furthermore, by the Poincar\'e inequality applied to $w_x$ (see Lemma \ref{lem.poin}),
 \begin{align*}
  \int_\T & u^{\beta-1}	\big(u^{(\alpha+\beta-1)/2}\big)_{xx}^2 dx
	\ge \min_\T u^{\beta-1}\int_\T\big(u^{(\alpha+\beta-1)/2}\big)_{xx}^2 dx \\
  &\ge 4\pi^2\min_\T u^{\beta-1}
	\int_\T\big(u^{(\alpha+\beta-1)/2}\big)_{x}^2 dx 
	= 4\pi^2\min_\T u^{\beta-1}\int_\T w_x^2dx,
\end{align*}
and it follows that
\begin{equation}\label{3.Bakry}
  \frac{d^2H_c}{dt^2}
	\ge \frac{16\pi^2\alpha\beta\kappa_c}{\alpha+\beta-1}
	\min_\T u^{\beta-1}\int_\T w_x^2dx = -\lambda_c\frac{dH_c}{dt}.
\end{equation}
Denoting by $P_c=-dH_c/dt$ the entropy production, this inequality can be
formulated as $dP_c/dt\le -\lambda_c P_c$. Gronwall's lemma then implies that
$P_c(u(t))\le P_c(u_0)e^{-\lambda_c t}$ for $t>0$ and in particular
$\lim_{t\to\infty}P_c(u(t))=0$. 

Integrating \eqref{3.Bakry} over $(t,s)$ with $t<s$ 
and passing to the limit $s\to\infty$, we see that
$$
  \int_0^\infty\int_\T  w_x^2 dx \le H_c(u_0) < \infty.
$$
Thus, there exists a sequence $t_j\to\infty$ such that $\|w_x(t_j)\|_{L^2(\T)}\to 0$.
Following the arguments of \cite[Prop.~1ii]{CDGJ06},\footnote{Also see the erratum
http://www.asc.tuwien.ac.at/$\sim$juengel/publications/pdf/errata05carri.pdf.}
it follows that
$\lim_{t_j\to\infty}H_c(u(t_j))=0$, and since $t\mapsto H_c(u(t))$ is nonincreasing,
any sequence converges, $\lim_{t\to\infty}H_c(u(t))=0$.

We integrate \eqref{3.Bakry} over $(t,\infty)$ and use
$\lim_{t\to\infty}(dH_c/dt)(u(t))=\lim_{t\to\infty}H_c(u(t))=0$:
$$
  -\frac{dH_c}{dt}(u(t)) \ge \lambda_c H_c(u(t)), \quad t>0.
$$
Thus, another application of Gronwall's lemma gives the conclusion.
\end{proof}
  
It remains to prove Lemma \ref{lem.ineq}. Set
\begin{equation}\label{3.Rc}
  R_c = \{(A,B)\in\R^2:A>0,\ (2A-B-1)(A+B-2)<0\}.
\end{equation}
	
\begin{lemma}\label{lem.ineq}
Let $(A,B)\in R_c$. Then for all smooth positive functions $w$,
\begin{equation}\label{3.ineq}
  \int_\T w_{xx}(w^A)_{xx} w^B dx \ge \kappa_c\int_\T w_{xx}^2 w^{A+B-1}dx,
\end{equation}
where
\begin{equation}\label{kappac}
  \kappa_c = \left\{\begin{array}{ll}
	-A(2A-B-1)/(A+B-2)>0 & \quad\mbox{if }A+B-2\neq 0, \\
	A & \quad\mbox{if }A+B-2=0.
	\end{array}\right.
\end{equation}
\end{lemma}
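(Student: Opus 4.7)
The plan is to reduce the integrand $w_{xx}(w^A)_{xx}w^B$ to a small number of monomial integrals, combine a single nonlinear integration-by-parts identity among them with a pointwise quadratic-form estimate, and optimize over a free parameter --- in the spirit of the systematic integration-by-parts calculus of \cite{JuMa06}. Introduce
\[
T_0 = \int_\T w^{A+B-1} w_{xx}^2\,dx, \quad
T_1 = \int_\T w^{A+B-2} w_x^2 w_{xx}\,dx, \quad
T_2 = \int_\T w^{A+B-3} w_x^4\,dx.
\]
Expanding $(w^A)_{xx} = Aw^{A-1}w_{xx} + A(A-1)w^{A-2}w_x^2$ rewrites the target inequality \eqref{3.ineq} as the concrete algebraic inequality
\[
(A-\kappa_c)\,T_0 + A(A-1)\,T_1 \;\ge\; 0.
\]

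Next, integrating the total derivative $(w^{A+B-2}w_x^3)_x$ over $\T$ produces the nonlinear IBP identity $3T_1 + (A+B-2)T_2 = 0$, so adding $c\,[3T_1 + (A+B-2)T_2]$ to the left-hand side above for any $c\in\R$ yields the equivalent inequality
\[
(A-\kappa_c)\,T_0 + (A(A-1)+3c)\,T_1 + c(A+B-2)\,T_2 \;\ge\; 0.
\]
The key observation is that this left-hand side is the integral of the pointwise quadratic form
\[
(A-\kappa_c)X^2 + (A(A-1)+3c)\,XY + c(A+B-2)\,Y^2
\]
in the variables $X = w^{(A+B-1)/2}w_{xx}$ and $Y = w^{(A+B-3)/2}w_x^2$, since $T_0 = \int X^2\,dx$, $T_1 = \int XY\,dx$, $T_2 = \int Y^2\,dx$. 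Imposing pointwise positive semi-definiteness therefore reduces the lemma to three standard sum-of-squares criteria: $A-\kappa_c \ge 0$, $c(A+B-2)\ge 0$, and $(A(A-1)+3c)^2 \le 4(A-\kappa_c)\,c(A+B-2)$.

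The final step is to choose $c$ so as to maximize the admissible $\kappa_c$. The natural choice $c := A(A-1)/3$ makes the cross-term coefficient equal to $2A(A-1)$ and tightens the discriminant precisely at
\[
\kappa_c \;=\; A - \frac{3A(A-1)}{A+B-2} \;=\; -\frac{A(2A-B-1)}{A+B-2},
\]
which is exactly the claimed value; the positivity of $\kappa_c$ is then equivalent to the defining inequality $(2A-B-1)(A+B-2)<0$ of $R_c$. The degenerate case $A+B-2=0$ is immediate: the IBP identity forces $T_1 = 0$, so the reduced inequality collapses to $(A-\kappa_c)T_0\ge 0$ and $\kappa_c = A$ works.

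The main obstacle I anticipate is the sign bookkeeping in this last step: since $A-1$ and $A+B-2$ can each take either sign on $R_c$, one must verify by casework that the SOS conditions and the claimed value of $\kappa_c$ are simultaneously achievable throughout $R_c$ (possibly after flipping the sign of $c$ in a sub-region), and that no alternative multiplier would yield a strictly larger admissible constant. That is where the geometry of $R_c$ genuinely enters and where the whole argument lives or dies.
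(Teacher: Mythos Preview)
Your approach is essentially identical to the paper's: the same single integration-by-parts identity $\int_\T (w^{A+B-2}w_x^3)_x\,dx = 0$, the same reduction to a pointwise quadratic form in $(\xi_1,\xi_2)=(w_x/w,\,w_{xx}/w)$, and the same discriminant/SOS analysis. The paper organizes the optimization a little differently---it completes the square in $c$ (treating $\kappa_c$ as given) and then maximizes over $\kappa_c$---but at the optimum one recovers precisely your choice $c = A(A-1)/3$, so the two routes meet at the same point.

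The sign obstacle you anticipate is handled in the paper exactly as you suggest. With $c = A(A-1)/3$, both SOS conditions $A-\kappa_c\ge 0$ and $c(A+B-2)\ge 0$ reduce to $(A-1)(A+B-2)\ge 0$; on the complementary part of $R_c$ the paper switches to $c = -A(A-1)/3$ (your ``flipped sign''), which annihilates the cross term and yields $\kappa_c = A$. Be aware that on this sub-region the displayed formula $-A(2A-B-1)/(A+B-2)$ actually exceeds $A$, so the pointwise SOS argument---in the paper and in your proposal alike---delivers only the weaker constant $\kappa_c = A$ there; the perturbation $w = 1+\delta\cos(2\pi x)$ shows that $\kappa_c\le A$ is in fact necessary for \eqref{3.ineq}, so this is a limitation of the stated constant rather than of your method.
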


\begin{proof}
The idea of the proof is to employ systematic integration by parts \cite{JuMa06}.
Since
\begin{equation}\label{3.ibpf}
  \int_\T(w_x^3 w^{A+B-2})_x dx = 0,
\end{equation}
we can formulate \eqref{3.ineq} as the following problem:
Find $c\in\R$ and $\kappa_c>0$ such that for all smooth positive functions $w$,
$$
  \int_\T\big(w_{xx}(w^A)_{xx} w^B + c(w_x^3 w^{A+B-3})_x 
	- \kappa_c w_{xx}^2 w^{A+B-1}\big)dx \ge 0.
$$
Calculating the derivatives and setting $\xi_1=w_x/w$, 
$\xi_2=w_{xx}/w$, this inequality is equivalent to
\begin{equation*}
  \int_\T w^{A+B-1}\Big((A-\kappa_c)\xi_2^2 + (A^2-A+3c)\xi_2\xi_1^2
	+ c(A+B-2)\xi_1^4\Big)dx \ge 0.
\end{equation*} 
The idea is to interpret the integrand as a polynomial in the variables
$\xi_1$, $\xi_2$ and to solve the following polynomial decision problem:
Find $c\in\R$ and $\kappa_c>0$ such that for all $(\xi_1,\xi_2)\in\R^2$,
\begin{equation}\label{3.aux}
  (A-\kappa_c)\xi_2^2 + (A^2-A+3c)\xi_2\xi_1^2
	+ c(A+B-2)\xi_1^4 \ge 0.
\end{equation}
This problem can be solved explicitly. Clearly, it must hold that
$A\ge \kappa_c>0$.
We distinguish two cases: $\kappa_c=A$ and $\kappa_c<A$.

First let $0<\kappa_c<A$. Then \eqref{3.aux} is valid if
the discriminant is nonpositive,
\begin{align*}
  0 & \ge (A^2-A+3c)^2 - 4c(A-\kappa_c)(A+B-2) \\
	&= \bigg(3c + A(A-1)-\frac23(A-\kappa_c)(A+B-2)\bigg)^2 \\
	&\phantom{xx}{}- \frac49(A-\kappa_c)^2(A+B-2)^2 
	+ \frac43A(A-1)(A-\kappa_c)(A+B-2).
\end{align*}
Choosing the minimizing value
\begin{align}
  c &= -\frac13\bigg(A(A-1)-\frac23(A-\kappa_c)(A+B-2)\bigg) \nonumber \\
	&= -\frac{A}{9}(A-2B+1) - \frac29\kappa_c(A+B-2), \label{3.c}
\end{align}
the discriminant is nonpositive if and only if
\begin{align*}
  0 &\ge -\frac49(A-\kappa_c)^2(A+B-2)^2 + \frac43A(A-1)(A-\kappa_c)(A+B-2) \\
	&= \frac49(A-\kappa_c)(A+B-2)\big(\kappa_c(A+B-2) + A(2A-B-1)\big).
\end{align*}
Set $\kappa_c=\eps A$ for $0<\eps<1$. Then the 
previous inequality is true if and only if
\begin{equation}\label{3.aux2}
  A(A+B-2)\big(\eps(A+B-2)+2A-B-1\big) \le 0.
\end{equation}
We infer that if
$$
  \eps = -\frac{2A-B-1}{A+B-2} > 0
$$
then \eqref{3.aux} holds. This implies that $\kappa_c = \eps A=-A(2A-B-1)/(A+B-2)>0$
and we need to choose $A>0$ and $(2A-B-1)(A+B-2)<0$.

Next, let $\kappa_c=A$. Then the quadratic term in $\xi_2$ in \eqref{3.aux} 
vanishes and the mixed term must vanish too, i.e.\ $c=-A(A-1)/3$.
Hence, the coefficient of the remaining term has to be nonnegative,
i.e.\ $-A(A-1)(A+B-2)\ge 0$. If $A=1$, inequality \eqref{3.ineq} becomes trivial.
The set of all $(A,B)$ such that $A>0$ and
$(A-1)(A+B-2) < 0$ is contained in the set of all $(A,B)$ satisfying $A>0$ and
$(2A-B-1)(A+B-2)<0$. This finishes the proof.
\end{proof}

We state now a discrete version of inequality \eqref{3.ineq}.

\begin{lemma}\label{lem.ineqd}
Let $w_0,\ldots,w_{N+1}\in\R$ satisfy $w_N=w_0$, $w_{N+1}=w_1$ and let $0<\eps\le 1$.
There exists a region $R\subset\R^2$, 
containing the line $A=1$, such that for all $(A,B)\in R$,
\begin{align}
  \sum_{i=1}^{N} & (w_{i+1}-2w_i+w_{i-1})(w_{i+1}^A-2w_i^A+w_{i-1}^A)w_i^B 
	\nonumber \\
	&\ge \kappa\sum_{i=1}^{N} \min_{j=i,i\pm 1}w_j^{A+B-1}(w_{i+1}-2w_i+w_{i-1})^2,
	\label{3.ineqd}
\end{align}
where $\kappa = \eps A>0$.
\end{lemma}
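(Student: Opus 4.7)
The plan is to adapt the systematic integration-by-parts argument of Lemma \ref{lem.ineq} to the discrete setting. First, I pass to ratio variables: assuming $w_i>0$ (the degenerate case follows by continuity), set $r_i=w_{i+1}/w_i$ and $s_i=w_{i-1}/w_i$, so that $w_{i+1}-2w_i+w_{i-1}=w_i(r_i+s_i-2)$ and $w_{i+1}^A-2w_i^A+w_{i-1}^A=w_i^A(r_i^A+s_i^A-2)$. After factoring out $w_i^{A+B+1}$, the left-hand side of \eqref{3.ineqd} becomes $\sum_i w_i^{A+B+1}(r_i+s_i-2)(r_i^A+s_i^A-2)$, while the right-hand side takes the form $\kappa\sum_i w_i^{A+B+1}\mu_i(r_i+s_i-2)^2$ with a positive factor $\mu_i$ depending on the three neighbours $w_{i-1},w_i,w_{i+1}$ that absorbs the $\min$.

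Next, mirroring the vanishing integral \eqref{3.ibpf}, I exploit a telescoping identity
$$
\sum_{i=1}^N\bigl[\Phi(w_{i+1},w_i)-\Phi(w_i,w_{i-1})\bigr]=0,
$$
which holds by periodicity for any function $\Phi$ of two variables. The natural candidate, inspired by the continuous boundary term $(w_x^3 w^{A+B-2})_x$, is $\Phi(a,b)=(a-b)^3 b^{A+B-2}$ or a symmetrized variant of it. After adding $c$ times this zero-sum to the left-hand side and subtracting $\kappa$ times the right-hand side, I factor out $w_i^{A+B+1}$ from each summand and reduce the problem to establishing a pointwise inequality $Q_{A,B,c,\kappa}(r_i,s_i)\ge 0$ on the positive quadrant $\{r,s>0\}$. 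Choosing $c$ to cancel the mixed leading coefficient, in analogy with \eqref{3.c}, will pin down the admissible region $R$ together with the constant $\kappa=\eps A$.

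Two sanity checks anchor the argument. When $A=1$ the reduced function $Q$ collapses to a constant multiple of $(r+s-2)^2$, so \eqref{3.ineqd} reduces to the trivial pointwise bound $w_i^B\ge\min_{j=i,i\pm 1}w_j^B$ multiplied by $(1-\eps)$; this confirms that the line $A=1$ lies in $R$ for every admissible $B$, as claimed. The $\min$ on the right-hand side, which cannot in general be replaced by a simpler expression evaluated at the index $i$ alone, is responsible for the slack parameter $\eps<1$ that shrinks $\kappa$ from $A$ to $\eps A$ and, together with the telescoping identity, explains why \eqref{3.ineqd} is genuinely a summed statement and fails term-by-term.

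The main obstacle lies in the reduction step. In the continuous Lemma \ref{lem.ineq} the variables $\xi_1,\xi_2$ range over all of $\R^2$, so the discriminant criterion \eqref{3.aux2} gives a clean algebraic characterization of $R_c$. In the discrete case the variables $r_i,s_i$ are restricted to the positive quadrant, and for non-integer $A$ the expression $r^A+s^A-2$ is not polynomial in $r,s$, so no exact discrete chain rule is available and the coupling between neighbouring indices through $\mu_i$ must be controlled delicately. Consequently the nonnegativity question for $Q_{A,B,c,\kappa}$ is posed over a semi-algebraic region with transcendental coefficients, and one expects $R$ to be strictly contained in $R_c$, converging to $R_c$ only as $\eps\to 0$. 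The hardest technical step will be to produce an explicit nonnegativity certificate for $Q_{A,B,c,\kappa}$ on a full neighbourhood of $\{A=1\}$, together with a quantitative description of how this neighbourhood expands as $\eps\downarrow 0$.
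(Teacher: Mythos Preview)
Your proposal aims at a genuinely two-dimensional region $R$, but the paper's own proof of this lemma is a one-line triviality: it simply takes $R=\{(A,B):A=1\}$, for which both sides of \eqref{3.ineqd} carry the common factor $(w_{i+1}-2w_i+w_{i-1})^2$ and the inequality reduces to $w_i^B\ge\eps\min_{j=i,i\pm 1}w_j^B$, which holds pointwise since $w_i$ is among the $w_j$ and $\eps\le 1$. Your $A=1$ sanity check is exactly this observation, and for the lemma \emph{as stated} nothing more is required.

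What you are actually sketching is the paper's \emph{subsequent} discussion, which goes beyond the trivial proof to investigate how large $R$ can be. The paper introduces the same ratio variables $X=w_{i+1}/w_i$, $Y=w_{i-1}/w_i$, the same telescoping shift term (a discrete analogue of \eqref{3.ibpf}), and the same reduction to a pointwise inequality $T(X,Y)\ge 0$; see \eqref{3.T}. However, the paper does \emph{not} obtain a global nonnegativity certificate for $T$: it notes that $T$ is not polynomial for non-integer $A$, so quantifier elimination is unavailable, and it proves only (Lemma \ref{lem.T}) that $T(X,Y)\ge 0$ for $(X,Y)$ in a neighbourhood of $(1,1)$, via a Taylor expansion in $h$ that collapses to the continuous quadratic form \eqref{3.aux}. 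The full region $R$ depicted in Figure \ref{fig.admregion} is obtained numerically, not analytically.

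So your plan is correct in spirit and matches the paper's approach to the deeper question, but (i) it overshoots what the lemma literally asks, and (ii) the ``hardest technical step'' you flag---a global nonnegativity certificate for $Q_{A,B,c,\kappa}$ on the whole positive quadrant---is precisely what the paper also leaves open.
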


The lemma is trivial as stated since \eqref{3.ineqd} clearly holds for 
$R=\{(A,B):A=1\}$ with $\kappa=1$. 
Figure \ref{fig.admregion} ilustrates the numerical
admissible regions for $(A,B)$ for two different values of $\eps$.
The admissible region $R$ is smaller than the region $R_c$ for the continuous case
but it approaches the latter region as $\kappa\to 0$.

\begin{figure}[ht]
\includegraphics[width=80mm]{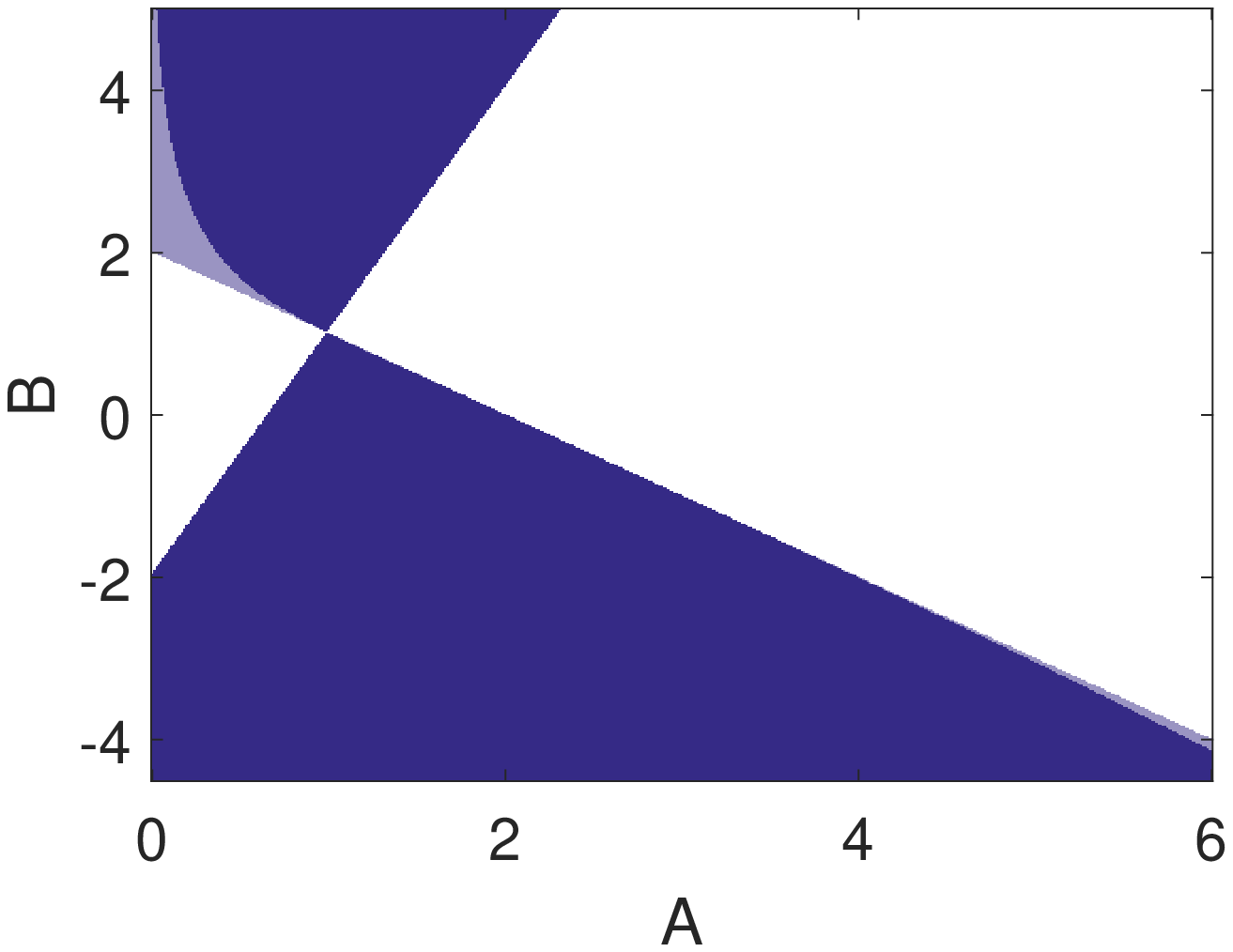}
\includegraphics[width=80mm]{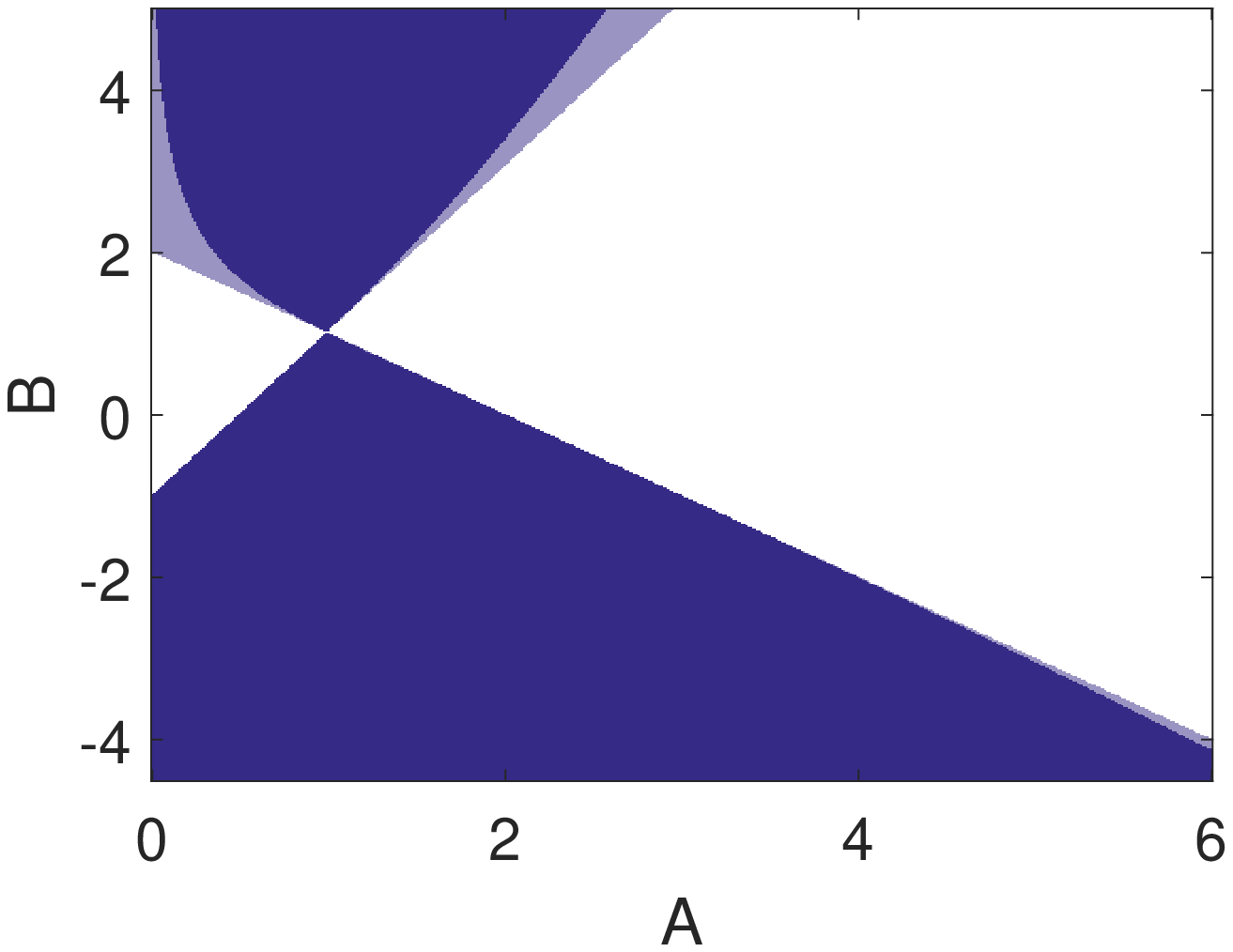}
\caption{The regions of admissible $(A,B)$ such that $T(X,Y)\ge 0$ for all 
$X$, $Y\ge 0$ using $c$ as in \eqref{3.c} with $\kappa_c=\kappa$ 
and $\kappa=A/4$ (left), 
$\kappa=A/100$ (right). The set $R$ is depicted in dark blue, $R_c\supset R$ 
in light blue.}
\label{fig.admregion}
\end{figure}

The idea of the proof of \eqref{3.ineqd}
is to add the following discrete version of the integration-by-parts 
formula \eqref{3.ibpf}, namely
$$
  \frac{1}{\rho^3}\sum_{i=1}^{N} \Big(M(w_{i+1},w_i)^{A+B+1-3\rho}
	(w_{i+1}^\rho-w_i^\rho)^3 
	- M(w_{i},w_{i-1})^{A+B+1-3\rho}(w_{i}^\rho-w_{i-1}^\rho)^3\Big) = 0.
$$
The sum vanishes because of the periodic boundary conditions. 
Here $\rho>0$ is a free parameter, and the function
$M(x,y)$ is a symmetric mean value, i.e., it satisfies
\begin{equation}\label{3.M}
  M(x,y)=M(y,x), \quad M(\lambda x,\lambda y)=\lambda M(x,y), \quad M(x,x)=x
\end{equation}
for all $x$, $y$, $\lambda\ge 0$. For the numerical simulations below, we will choose 
$\rho=(A+B+1)/3$ such that the mean function does not need to be specified.
Then \eqref{3.ineqd} holds if we can show the following inequality 
for all admissible $(A,B)$ and $w_i\neq 0$:
\begin{align*}
  &\sum_{i=1}^{N} w_i^{A+B+1}\bigg\{
	\bigg(\bigg(\frac{w_{i+1}}{w_i}\bigg)^A + \bigg(\frac{w_{i-1}}{w_i}\bigg)^A - 2\bigg)
	\bigg(\frac{w_{i+1}}{w_i} + \frac{w_{i-1}}{w_i} - 2\bigg) \\
	&\phantom{x}{}- \kappa\min_{j=i,i\pm 1}\bigg(\frac{w_j}{w_i}\bigg)^{A+B-1}
	\bigg(\frac{w_{i+1}}{w_i} + \frac{w_{i-1}}{w_i} - 2\bigg)^2 \\
	&\phantom{x}{}+ \frac{c}{\rho^3}\bigg(M\bigg(\frac{w_{i+1}}{w_i},1\bigg)^{A+B+1-3\rho}
	\bigg(\left(\frac{w_{i+1}}{w_i}\right)^\rho-1\bigg)^3\\
	&\phantom{xxxxxx}- M\bigg(\frac{w_{i-1}}{w_i},1\bigg)^{A+B+1-3\rho}
	\bigg(1-\left(\frac{w_{i-1}}{w_i}\right)^\rho\bigg)^3
	\bigg)\bigg\} \ge 0.
\end{align*}

We verify this inequality pointwise, i.e.\ setting $X=w_{i+1}/w_i$ and
$Y=w_{i-1}/w_i$, we wish to find $c\in\R$, $\kappa>0$ such that
for all $X$, $Y>0$,
\begin{align}
  T(X,Y) &:= (X^A+Y^A-2)(X+Y-2) \nonumber \\
	&\phantom{xx}{}+ \frac{c}{\rho^3}\Big(M(X,1)^{A+B+1-3\rho}(X^\rho-1)^3 
	+ M(Y,1)^{A+B+1-3\rho}(Y^\rho-1)^3\Big) \label{3.T} \\
	&\phantom{xx}{}- \kappa\min\{1,X^{A+B-1},Y^{A+B-1}\}(X+Y-2)^2 \ge 0. \nonumber
\end{align}
The first term $(X^A+Y^A-2)(X+Y-2)$ becomes negative in certain regions;
see Figure \ref{fig.levelset}. It is compensated by the second term (shift term)
on the right-hand side of \eqref{3.T} if we choose the constant $c$ according to 
\eqref{3.c} with $\kappa=\kappa_c$ as in \eqref{kappac}.

\begin{figure}[ht]
\includegraphics[width=75mm]{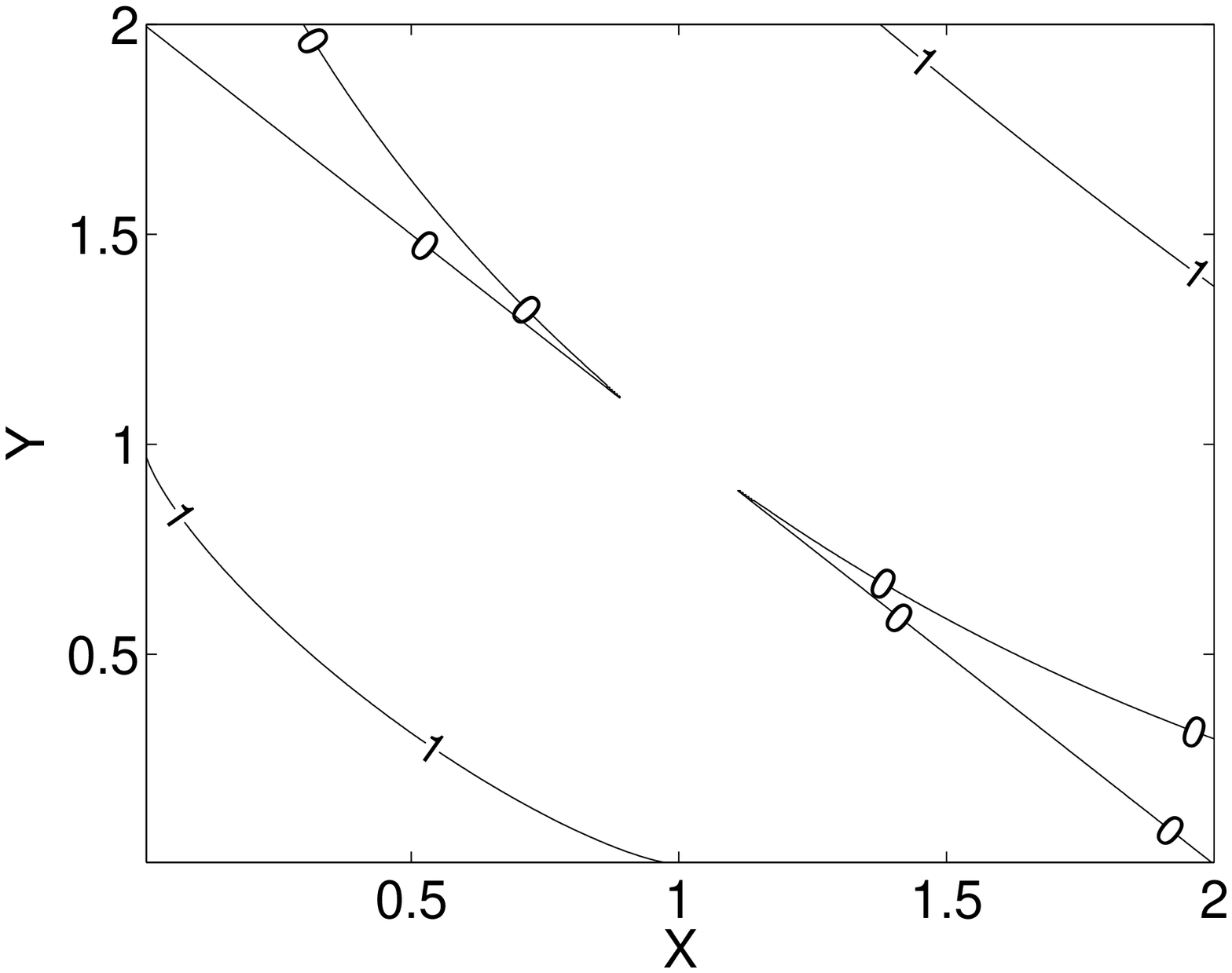}
\includegraphics[width=75mm]{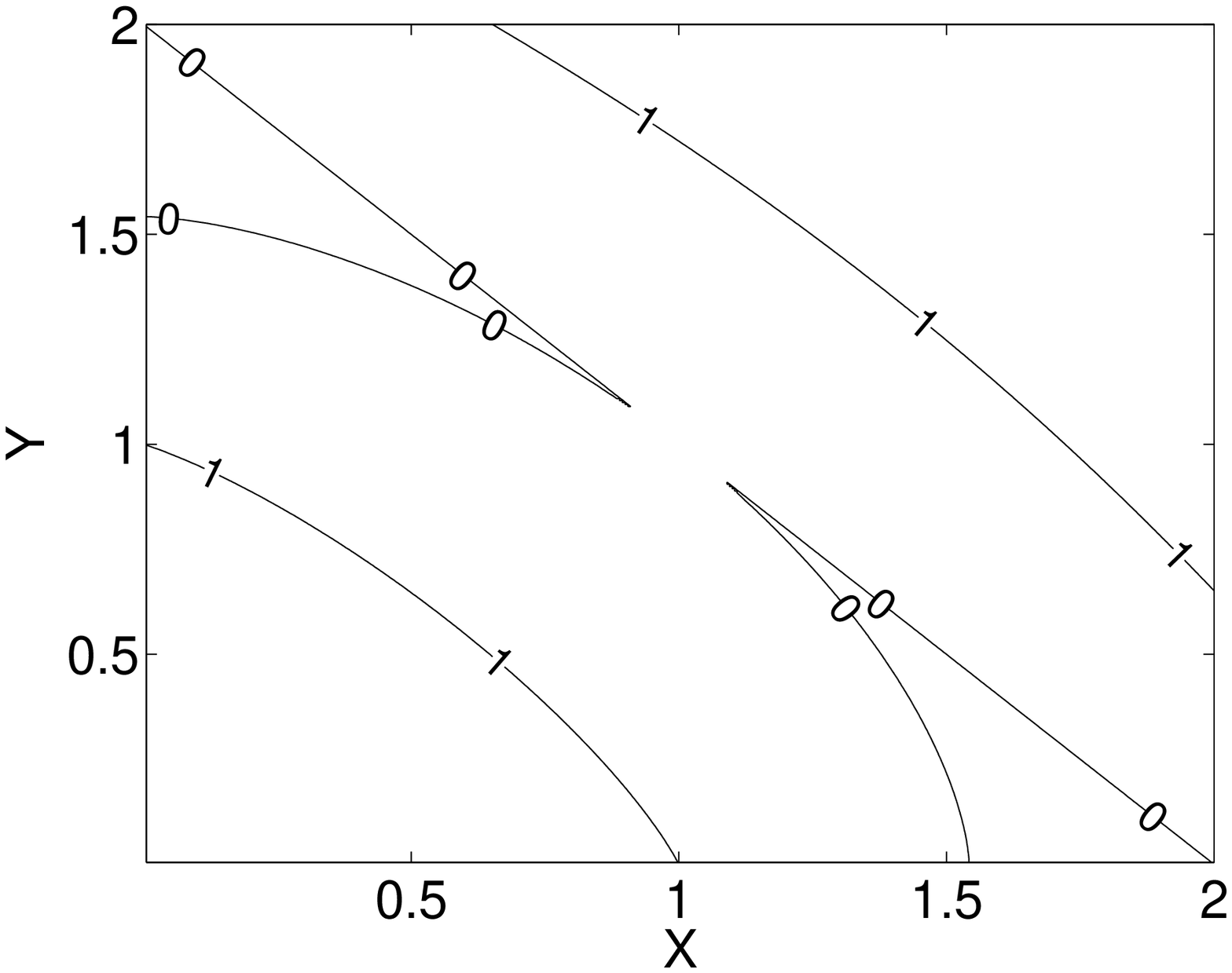}
\caption{Level sets $(X^A+Y^A-2)(X+Y-2)=0$ and $(X^A+Y^A-2)(X+Y-2)=1$ for 
$A=0.6$, $B=4$ (left) and
$A=1.6$, $B=2.5$ (right). We have chosen $\kappa=\kappa_0=A/200$ and $c$ as in 
\eqref{3.c}.}
\label{fig.levelset}
\end{figure}

Unfortunately, it seems to be difficult to prove \eqref{3.T} analytically
in full generality.
Note that polynomial quantifier elimination does not apply if $A$ and $B$
are not integers, since the function $T(X,Y)$ generally is {\em not} a
polynomial. Instead, we verify \eqref{3.T} analytically for all 
$(A,B)\in R_c$ and all $(X,Y)$ in some neighborhood of $(1,1)$. 

\begin{lemma}\label{lem.T}
Let $T$ be given by \eqref{3.T} and let $(A,B)\in R_c$, where $R_c$ is
defined in \eqref{3.Rc}. 
Then there exists a neighborhood $W$ of $(1,1)$ such that for all $(X,Y)\in W$, 
$$
  T(X,Y) \ge 0
$$
holds for $c$ as in \eqref{3.c} and with $\kappa_c=\kappa$ as in \eqref{kappac}.
\end{lemma}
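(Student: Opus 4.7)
My plan is to reduce this discrete inequality to the continuous polynomial inequality of Lemma \ref{lem.ineq} via an anisotropic Taylor expansion of $T$ near $(X,Y) = (1,1)$. First I would introduce the symmetric coordinates $s = (X+Y)/2 - 1$ and $d = (X-Y)/2$, so that $X = 1+s+d$ and $Y = 1+s-d$; by the $X \leftrightarrow Y$ symmetry of the definition \eqref{3.T}, the function $T(1+s+d, 1+s-d)$ is even in $d$. Using the trivial bound $\min\{1, X^{A+B-1}, Y^{A+B-1}\} \le 1$ together with $\kappa \ge 0$, I can replace the $\min$ factor in \eqref{3.T} by $1$ and only drop a non-negative quantity, obtaining a smooth surrogate $S(X, Y) \le T(X, Y)$; it then suffices to prove $S \ge 0$ on a neighborhood of $(1,1)$.

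The heart of the plan is to organize the Taylor expansion of $S$ around $(s, d) = (0, 0)$ by the \emph{anisotropic weight} that assigns weight $2$ to $s$ and weight $1$ to $d$. Using the expansions $X^A + Y^A - 2 = 2As + A(A-1)(s^2+d^2) + \tfrac{1}{3}A(A-1)(A-2)(s^3 + 3sd^2) + \cdots$, $X + Y - 2 = 2s$, and $M(X,1)^{A+B+1-3\rho}(X^\rho - 1)^3 = \rho^3(X-1)^3 + \tfrac{\rho^3(A+B-2)}{2}(X-1)^4 + \cdots$ (independent of $M$ at these orders because the exponent combines to $A+B-2$), one finds that the lowest anisotropic weight appearing in $S$ is $4$, with weight-$4$ part
\[
f(s, d) := 4(A-\kappa)s^2 + \bigl(2A(A-1) + 6c\bigr)sd^2 + c(A+B-2)d^4.
\]
A direct check gives $f(s, d) = P(d, 2s)$, where $P(\xi_1, \xi_2) := (A-\kappa)\xi_2^2 + (A^2-A+3c)\xi_2 \xi_1^2 + c(A+B-2)\xi_1^4$ is exactly the polynomial proved non-negative in Lemma \ref{lem.ineq}. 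For $(A, B) \in R_c$ with $c$ and $\kappa_c = \kappa$ chosen as in \eqref{3.c}--\eqref{kappac}, the choice of $c$ forces the discriminant of $P$ (as a quadratic in $\xi_2$) to vanish, so $f$ is in fact a perfect square: $f(s, d) = 4(A-\kappa)(s - \sigma_* d^2)^2$ with $\sigma_* := -(A(A-1) + 3c)/(4(A-\kappa))$.

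The remainder $R := S - f$ consists of terms of anisotropic weight $\ge 6$; by the $d \mapsto -d$ symmetry all odd-weight (and in particular all weight-$5$) terms cancel. Under the scaling $s = t^2 \sigma$, $d = t \delta$ with $(\sigma, \delta)$ ranging in any fixed compact set, this gives $S = t^4 f(\sigma, \delta) + O(t^6)$, which immediately yields $S \ge 0$ for small $t$ on any open region where $f(\sigma, \delta) > 0$ strictly. The main obstacle, which I expect to be the delicate part, is to control $S$ along the critical parabola $\{s = \sigma_* d^2\}$ on which $f$ vanishes to second order: there the sign of $S$ is governed by the coefficient of $d^6$ obtained by substituting $s = \sigma_* d^2$ into the weight-$6$ part of $R$, a specific polynomial combination---depending on $(A, B, c)$---of the Taylor coefficients of $s^3$, $s^2 d^2$, $sd^4$, and $d^6$ in the expansion of $S$. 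The plan is to compute this combination explicitly and verify it is non-negative for all $(A, B) \in R_c$; combined with the strict positivity of $f$ in a tubular complement of the parabola, this would yield $S \ge 0$, hence $T \ge 0$, on a sufficiently small neighborhood $W$ of $(1, 1)$.
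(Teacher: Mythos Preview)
Your approach coincides with the paper's: the paper parametrizes $X=1+hv+h^2u/2$, $Y=1-hv+h^2u/2$ and expands $T$ in powers of $h$, which is exactly your anisotropic expansion in $(s,d)$ with $s=h^2u/2$ and $d=hv$. Both routes produce the same weight-$4$ leading term, namely the polynomial \eqref{3.aux}, and reduce to its non-negativity established in Lemma~\ref{lem.ineq}. The paper then simply concludes $T\ge 0$ for small $h$ from $T=h^4P(u,v)+O(h^5)$ with $P\ge 0$, and stops.

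You are more careful than the paper at precisely this point: you correctly observe that with $c$ as in \eqref{3.c} and $\kappa=\kappa_c$ from \eqref{kappac} the discriminant of $P$ vanishes, so $P$ is a perfect square and has a one-dimensional zero set (your critical parabola $s=\sigma_* d^2$), along which the weight-$4$ term carries no sign information and the remainder decides. The paper's proof does not confront this degeneracy. Your proposed remedy---pass to the weight-$6$ contribution on the parabola and verify its sign for all $(A,B)\in R_c$---is the natural continuation, but you leave it as a plan; whether that coefficient is actually non-negative throughout $R_c$ (and, if it is merely non-negative rather than strictly positive, whether yet higher orders cooperate) is an explicit computation you have not carried out. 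So your proposal is sound and matches the paper's strategy, but remains incomplete at exactly the step where it goes beyond the paper's own argument.
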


If the step size $h>0$ is small enough, we expect that the quotients
$w_{i+1}/w_i$ are close to one for all $i=0,\ldots,N-1$. This means that 
$(X,Y)$ lies in a neighborhood of $(1,1)$, and the lemma applies.

\begin{proof}
We use the local coordinates $u=(X+Y-2)/h^2$ and $v=(X-Y)/(2h)$, which correspond 
to (central) second-order and first-order derivatives. Then 
$X = 1+hv+h^2 u/2$ and $Y=1-hv+h^2 u/2$. We develop $T$ as a function of $h$
at $h=0$. For this, we observe that $M(1,1)=1$ and $M_X(1,1)=M_Y(1,1)=1/2$.
Indeed, we infer from the properties \eqref{3.M} that
\begin{align*}
  M_X(1,1) &= \lim_{\eps\to 0}\frac{1}{\eps}\big(M(1+\eps,1)-M(1,1)\big)
	= \lim_{\eps\to 0}\bigg(\frac{1+\eps}{\eps}M\bigg(1,\frac{1}{1+\eps}\bigg) 
	- \frac{1}{\eps}\bigg) \\
	&= \lim_{\eps\to 0}\bigg(\frac{1+\eps}{\eps}M\bigg(1,1-\frac{\eps}{1+\eps}\bigg)
	- \frac{1+\eps}{\eps} + 1\bigg) \\
	&= \lim_{\eps\to 0}\bigg(\frac{1+\eps}{\eps}M\bigg(1-\frac{\eps}{1+\eps},1\bigg)
	- \frac{1+\eps}{\eps} + 1\bigg) \\
	&= \lim_{\delta\to 0}\frac{1}{\delta}\big(M(1-\delta,1)-M(1,1)\big) + 1 \\
	&= -M_X(1,1) + 1,
\end{align*}
which implies that $M_X(1,1)=1/2$.

Calculating the Taylor series of the terms in $T$ with respect to $h$ at $h=0$ 
leads to
\begin{align*}
  (X^A+Y^A-2)(X+Y-2) &= Au\big((A-1)v^2+u\big)h^4 + O(h^6), \\
	\frac{c}{\rho^3}M(X,1)^{A+B+1-3\rho}(X^\rho-1)^3 
	&= cv^3h^3 + \frac{c}{2}\big((A+B-2)v^2+3u\big)v^2 h^4 
	+ O(h^5), \\
	\frac{c}{\rho^3}M(Y,1)^{A+B+1-3\rho}(Y^\rho-1)^3 
	&= -cv^3h^3 + \frac{c}{2}\big((A+B-2)v^2+3u\big)v^2 h^4 
	+ O(h^5).
\end{align*}
In particular, as expected, the explicit choices of both $\rho$ and $M(x,y)$ 
do not change the behavior of the shift term locally around the equilibrium 
$w_{i-1}=w_{i}=w_{i+1}$ or $h=0$.
Moreover, $\min\{1,X^{A+B-1},Y^{A+B-1}\}=1+O(h)$ and $(X+Y-2)^2=u^2h^4$.
Combining these expressions gives
$$
  T(X,Y) = h^4\Big((A-\kappa)u^2 + \big(A(A-1)+3c\big)uv^2 + c(A+B-2)v^4\Big) + O(h^5).
$$
The polynomial
$$
  (u,v)\mapsto (A-\kappa)u^2 + \big(A(A-1)+3c\big)uv^2 + c(A+B-2)v^4
$$
is the same as in \eqref{3.aux}. The proof of Lemma \ref{lem.ineq} shows that
it is nonnegative for all $(A,B)\in R_c$ with $c$ as in \eqref{3.c} and $\kappa_c$
as in \eqref{kappac}. We deduce that
$T(X,Y)\ge 0$ holds for all $(A,B)\in R_c$ if $h\in\R$ is sufficiently small.
This proves the lemma.
\end{proof}


\section{Discrete porous-medium equation}\label{sec.pme}

We apply the abstract Bakry-Emery method to a finite-difference approximation
of the porous-medium equation, i.e., we choose $A(u)=-(u^\beta)_{xx}$ on $\T$ for
suitable functions $u$. 
Let $\tau>0$ be the time step and $h>0$ the space step. A natural scheme would be
$$
  u_i^k - u_i^{k-1} = \tau h^{-2}\big((u_{i+1}^k)^\beta - 2(u_i^k)^\beta
	+ (u_{i-1}^k)^\beta\big),
$$
for all $i=1,\ldots,N$, $k\in\N$, and $u_N^k=u_0^k$, $u_{N+1}^k=u_1^k$.
The corresponding discrete entropy is 
$H(u^k)=h\sum_{i=1}^N ((u^k_i)^\alpha-\overline{u}^\alpha)/(\alpha-1)$ and
$\overline{u}=h\sum_{i=1}^N u_i^0$ is the constant steady state. We choose $\alpha>1$
and $\beta>1$.

Unfortunately, 
the abstract Bakry-Emery method cannot be applied to this scheme. 
The problem is the second inequality in Assumption A1. Indeed, using
the inequality $y^\alpha-z^\alpha\ge \alpha z^{\alpha-1}(y-z)$ for all $y$, $z\ge 0$,
which follows from the convexity of $z\mapsto z^\alpha$ for $\alpha>1$,
inserting the numerical scheme and then summing by parts, we find that
\begin{align*}
  -\tau P &= H(u^k) - H(u^{k-1})
	= h\sum_{i=0}^N\big((u_i^k)^\alpha - (u_i^{k-1})^\alpha\big) \\
	&\ge \alpha h\sum_{i=1}^N(u_i^{k-1})^{\alpha-1}(u_i^k-u_i^{k-1}) \\
	&= \alpha h^{-1}\tau\sum_{i=1}^N(u_i^{k-1})^{\alpha-1}
	\Big(\big((u_{i+1}^k)^\beta-(u_i^k)^\beta\big) 
	- \big((u_{i}^k)^\beta-(u_{i-1}^k)^\beta\big)\Big) \\
	&= -\alpha h^{-1}\sum_{i=1}^N
	\big((u_{i+1}^{k-1})^{\alpha-1}-(u_i^{k-1})^{\alpha-1}\big)
	\big((u_{i+1}^k)^{\beta}-(u_i^k)^{\beta}\big).
\end{align*}
This expression cannot be estimated further; it may even have the wrong
sign. We need a scheme that avoids the use of the inequality 
$y^\alpha-z^\alpha\ge \alpha z^{\alpha-1}(y-z)$. We stress the fact that this
problem does not occur in the semi-discrete scheme
$$
  \pa_t u_i = h^{-2}\big((u_{i+1}^k)^\beta - 2(u_i^k)^\beta
	+ (u_{i-1}^k)^\beta\big),
$$
since then 
\begin{align*}
  \frac{dH}{dt} &= \frac{\alpha h}{\alpha-1}\sum_{i=0}^N u_i^{\alpha-1}\pa_t u_i
  = \frac{\alpha}{(\alpha-1)h}\sum_{i=0}^N u_i^{\alpha-1}
  \big((u_{i+1}^k)^\beta - 2(u_i^k)^\beta	+ (u_{i-1}^k)^\beta\big) \\
	&= -\frac{\alpha}{(\alpha-1)h}\sum_{i=0}^N
	\big((u_{i+1}^{k})^{\alpha-1}-(u_i^{k})^{\alpha-1}\big)
	\big((u_{i+1}^k)^{\beta}-(u_i^k)^{\beta}\big),
\end{align*}
and this expression is nonpositive (since $\alpha>1$).

Our idea is to make the entropy production {\em linear} in its argument. For this, we
introduce the new variable $v_i^k=(u_i^k)^\alpha$. 
In the (continuous) variable $v=u^\alpha$, the evolution equation transforms to
$\pa_t v = -v^{(\alpha-1)/\alpha}\Delta(v^{\beta/\alpha})$, 
which inspires the numerical scheme
\begin{equation}\label{4.scheme}
  v_i^k - v_i^{k-1} = \alpha\tau h^{-2}(v_i^k)^{(\alpha-1)/\alpha}
	\big((v_{i+1}^k)^{\beta/\alpha} - 2(v_i^k)^{\beta/\alpha} 
	+ (v_{i-1}^k)^{\beta/\alpha}\big),
\end{equation}
for all $i=1,\ldots,N$, $k\in\N$, and $v_N^k=v_0^k$, $v_{N+1}^k=v_1^k$. 
The discrete entropy and Fisher information become
$$
  H(v^k) = \frac{h}{\alpha-1}\sum_{i=1}^N(v_i^k - V),
	\quad F(v^k) = \frac{1}{h}\sum_{i=1}^{N}
	\big((v_{i+1}^k)^\gamma - (v_i^k)^\gamma\big)^2,
$$
where $V>0$ has to be determined and $\gamma=(\alpha+\beta-1)/(2\alpha)$.
The entropy production can be estimated, using summation by parts, as
\begin{align}
  -\tau P(v^k) &= H(v^k)-H(v^{k-1})
	= \frac{h}{\alpha-1}\sum_{i=1}^N(v^k_i-v_i^{k-1}) \nonumber \\
	&= \frac{\alpha\tau}{(\alpha-1)h}\sum_{i=1}^N(v_i^k)^{(\alpha-1)/\alpha}
	\big((v_{i+1}^k)^{\beta/\alpha} - 2(v_i^k)^{\beta/\alpha} 
	+ (v_{i-1}^k)^{\beta/\alpha}\big) \nonumber \\
	&= -\frac{\alpha\tau}{(\alpha-1)h}\sum_{i=1}^N\big((v_{i+1}^k)^{(\alpha-1)/\alpha}-
	(v_i^k)^{(\alpha-1)/\alpha}\big)\big((v_{i+1}^k)^{\beta/\alpha} 
	- (v_i^k)^{\beta/\alpha}\big) \le 0. \label{4.P}
\end{align}
According to Lemma \ref{lem.xy}, the entropy production can be estimated 
from below {\em and} above in terms of the Fisher information. 

After this motivation, we prove the existence of solutions to \eqref{4.scheme}. 

\begin{lemma}\label{lem.ex}
For given $v_i^{k-1}\ge 0$, $i=1,\ldots,N$, there exists a solution
$v_i^k\ge 0$, $i=1,\ldots,N$, to \eqref{4.scheme}.
\end{lemma}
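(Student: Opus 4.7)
The plan is to apply Brouwer's fixed-point theorem to a continuous extension of \eqref{4.scheme} and then recover nonnegativity from the sign structure of the equation. Since $\alpha>1$ makes both exponents $(\alpha-1)/\alpha$ and $\beta/\alpha$ positive, I would extend the right-hand side continuously to all of $\R^N$ by replacing each occurrence of $v_j$ by $(v_j)_+$; call the resulting map $\Psi:\R^N\to\R^N$ and look for zeros of $F(v):=v-v^{k-1}-\Psi(v)$.

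To invoke the standard corollary of Brouwer's theorem, it suffices to show $F(v)\cdot v>0$ for $\|v\|$ sufficiently large. The heart of the matter is the estimate $\sum_i v_i\Psi_i(v)\le 0$: setting $\phi_i=(v_i)_+^{(2\alpha-1)/\alpha}$ and $\psi_i=(v_i)_+^{\beta/\alpha}$, one checks that $v_i(v_i)_+^{(\alpha-1)/\alpha}=\phi_i$ for every $v_i\in\R$, and summation by parts with periodic boundary gives
\[
\sum_{i=1}^N v_i\Psi_i(v) = -\alpha\tau h^{-2}\sum_{i=1}^N(\phi_{i+1}-\phi_i)(\psi_{i+1}-\psi_i)\le 0,
\]
since $x\mapsto x_+^{(2\alpha-1)/\alpha}$ and $x\mapsto x_+^{\beta/\alpha}$ are both nondecreasing, so the two differences share a sign. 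Combined with $\langle v,v^{k-1}\rangle\le\|v\|\,\|v^{k-1}\|$, this yields $F(v)\cdot v>0$ whenever $\|v\|>\|v^{k-1}\|$, and Brouwer delivers a zero $v=v^k$ inside the closed ball of radius $\|v^{k-1}\|+1$.

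For the nonnegativity of $v^k$, I would test the relation $v_i-v_i^{k-1}=\Psi_i(v)$ with $-(v_i)_-$. Because $(v_i)_-(v_i)_+^{(\alpha-1)/\alpha}=0$ pointwise, the nonlinear term disappears and what remains is
\[
\sum_{i=1}^N (v_i)_-^2 + \sum_{i=1}^N v_i^{k-1}(v_i)_- = 0.
\]
Both sums are nonnegative (using $v_i^{k-1}\ge 0$), so $(v_i)_-=0$ for all $i$, i.e.\ $v^k\ge 0$. With $v^k\ge 0$ the cutoff is inactive and $v^k$ solves the original scheme \eqref{4.scheme}.

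The hard part is aligning three properties simultaneously in the choice of extension: continuity at zero (which is why $\alpha>1$ is needed, so that all relevant exponents remain positive), matched monotonicity of $\phi$ and $\psi$ so that the summation-by-parts bracket has a definite sign, and orthogonality of the nonlinear part to $(v)_-$ so that the negative-part test kills the right-hand side. The plain positive-part cutoff on every $v_j$ inside $\Psi$ achieves all three at once; given that, both the Brouwer step and the nonnegativity argument are routine.
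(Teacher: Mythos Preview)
Your proof is correct and follows the same overall strategy as the paper: extend the scheme to all of $\R^N$, obtain a zero by a fixed-point argument, and recover nonnegativity by testing with the negative part. The paper only sketches this, pointing to a ``standard fixed-point theorem'' and recording the a priori estimates; you fill in the details cleanly.

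There is one genuine simplification in your version worth noting. Your extension replaces every $v_j$ by $(v_j)_+$, so when you test with $-(v_i)_-$ the nonlinear term drops out \emph{pointwise} via $(v_i)_-(v_i)_+^{(\alpha-1)/\alpha}=0$, and nonnegativity is immediate. The paper's (implicit) extension instead leads to a term of the form $(v_i^k)_-^{2-1/\alpha}$ multiplying the discrete Laplacian, which then requires a further summation by parts and the monotonicity of $z\mapsto z_-^{2-1/\alpha}$ to conclude. For the a priori bound needed in the fixed-point step, the paper uses the $\ell^1$-type estimate $\sum_i v_i^k\le\sum_i v_i^{k-1}$ coming from \eqref{4.P}, whereas you use the $\ell^2$ coercivity $F(v)\cdot v\ge\|v\|^2-\|v\|\,\|v^{k-1}\|$ directly; both suffice.
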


\begin{proof}
We give only a sketch of the proof since the existence of solutions follows from
a standard fixed-point theorem. We only provide the a priori estimates needed
for this argument. First multiply \eqref{4.scheme} by $(v_i^k)_- = \min\{v_i^k,0\}$
and sum over $i=1,\ldots,N$. Since $v_i^k(v_i^k)_-=(v_i^k)^2_-$, we obtain
\begin{align*}
  \sum_{i=1}^N (v_i^k)_-^2
	&= \sum_{i=1}^N v_i^{k-1}(v_i^k)_-
	+ \alpha\tau h^{-2}\sum_{i=1}^N (v_i^k)^{2-1/\alpha}_-\big((v_{i+1}^k)^{\beta/\alpha} 
	- 2(v_i^k)^{\beta/\alpha} + (v_{i-1}^k)^{\beta/\alpha}\big) \\
	&\le \alpha\tau h^{-2}\sum_{i=1}^N (v_i^k)^{2-1/\alpha}_-
	\big(((v_{i+1}^k)^{\beta/\alpha} - (v_i^k)^{\beta/\alpha}) 
	- ((v_i^k)^{\beta/\alpha} - (v_{i-1}^k)^{\beta/\alpha})\big).
\end{align*}
By summation by parts, this becomes
$$
  \sum_{i=1}^N (v_i^k)_-^2 \le -\alpha\tau h^{-2}\sum_{i=1}^N 
	\big((v_{i+1}^k)^{2-1/\alpha}_- - (v_i^k)^{2-1/\alpha}_-\big)
	\big((v_{i+1}^k)^{\beta/\alpha} - (v_i^k)^{\beta/\alpha}\big) \le 0,
$$
since $z\mapsto z_-^{2-1/\alpha}$ is nondecreasing. 
We infer that $(v_i^k)_-=0$ and hence $v_i^k\ge 0$.
Next, by \eqref{4.P},
$$
  \sum_{i=1}^N v_i^k \le \sum_{i=1}^N v_i^{k-1} \le \sum_{i=1}^N v_i^0,
$$
and this is the desired a priori estimate.
\end{proof}

Next, we turn to the proof of our main result.

\begin{proof}[Proof of Theorem \ref{thm.main}]
We verify the assumptions of Proposition \ref{prop.BE}. 
For this, we continue our estimates for $P$.  Applying 
Lemma \ref{lem.xy} with $a=(\alpha-1)/\alpha$ and $b=\beta/\alpha$ to \eqref{4.P},
we obtain the inequalities
$$
  P(v^k) \le \frac{\alpha}{\alpha-1} F(v^k), \quad
	F(v^k)\le \frac{\alpha\gamma^2}{\beta} P(v^k) 
	= \frac{(\alpha+\beta-1)^2}{4\alpha\beta}P(v^k).
$$
Thus, Assumption A1 is satisfied with $C_m=4\alpha\beta/(\alpha+\beta-1)^2$ 
and $C_M=\alpha/(\alpha-1)$.

Next, we estimate the difference $F(v^k)-F(v^{k-1})$. To this end, we set
$v_i:=v_i^k$, $\overline{v}_i:=v_i^{k-1}$, $a_i:=(v_i-\overline{v}_i)/\tau$ and write
\begin{align*}
  F(v)-F(\overline{v})
	&= \frac{1}{h}\sum_{i=1}^N \big((v_{i+1}^\gamma-v_i^\gamma)^2
	- (\overline{v}_{i+1}^\gamma-\overline{v}_i^\gamma)^2\big) \\
	&= \frac{1}{h}\sum_{i=1}^N\Big((v_{i+1}^\gamma-v_i^\gamma)^2
	- \big((v_{i+1}-\tau a_{i+1})^\gamma-(v_i-\tau a_i)^\gamma\big)^2\Big) \\
	&=: G(\tau).
\end{align*}
The idea of the proof is to expand $G(\tau)$ around zero:
$$
  F(v)-F(\overline{v}) = G(0) + G'(0)\tau + \frac12 G''(\xi)\tau^2
$$
for some $\xi\in(0,\tau)$. We show that the right-hand side can be bounded from
above by $-\tau KF(v)$ for some $K>0$, 
which verifies Assumption A2. This idea has been first employed in \cite{CJS16}. 
Clearly, we have $G(0)=0$. The first derivatives of $G$ equal
\begin{align*}
  G'(\tau) &= 2\gamma h^{-1}\sum_{i=1}^N
	\big((v_{i+1}-\tau a_{i+1})^\gamma-(v_i-\tau a_i)^\gamma\big) \\
	&\phantom{xx}{}\times
	\big((v_{i+1}-\tau a_{i+1})^{\gamma-1}a_{i+1}-(v_i-\tau a_i)^{\gamma-1}a_i\big), \\
  G''(\tau) &= -2\gamma h^{-1}\sum_{i=1}^N\Big(
	\gamma\big((v_{i+1}-\tau a_{i+1})^{\gamma-1}a_{i+1}
	-(v_i-\tau a_i)^{\gamma-1}a_i\big)^2 \\
	&\phantom{=-2\gamma h^{-1}\sum_{i=1}^N\Big(}{}+ (\gamma-1)
	\big((v_{i+1}-\tau a_{i+1})^\gamma-(v_i-\tau a_i)^\gamma\big) \\
	&\phantom{xx}{}\times
	\big((v_{i+1}-\tau a_{i+1})^{\gamma-2}a_{i+1}^2-(v_i-\tau a_i)^{\gamma-2}a_i^2\big)
	\Big).
\end{align*}

First, we claim that $G''(\tau)\le 0$ for any $\tau>0$. 
Indeed, we replace $v_i-\tau a_i$ by $\overline v_i$ and obtain
\begin{align*}
  G''(\tau) &= -2\gamma h^{-1}\sum_{i=0}^N(c_1 a_{i+1}^2 + c_2 a_{i+1}a_i + c_3 a_i^2), 
	\quad\mbox{where} \\
	c_1 &= \gamma\overline v_{i+1}^{2\gamma-2}+(\gamma-1)\overline v_{i+1}^{\gamma-2}
	(\overline v_{i+1}^\gamma-\overline v_i^\gamma)
	= (2\gamma-1)\overline v_{i+1}^{2\gamma-2} 
	- (\gamma-1)\overline v_{i+1}^{\gamma-2}\overline v_i^\gamma, \\
	c_2 &= -2\gamma\overline v_{i+1}^{\gamma-1}\overline v_i^{\gamma-1}, \\
	c_3 &= \gamma\overline v_i^{2\gamma-2}-(\gamma-1)\overline v_i^{\gamma-2}
	(\overline v_{i+1}^{\gamma}-\overline v_i^\gamma)
	= (2\gamma-1)\overline v_i^{2\gamma-2} 
	- (\gamma-1)\overline v_i^{\gamma-2}\overline v_{i+1}^\gamma.
\end{align*}
It holds that $c_1\ge 0$, since this inequality is equivalent to 
$(2\gamma-1)\overline v_{i+1}^\gamma\ge(\gamma-1)\overline v_i^\gamma$, 
and this is true
for $1/2\le\gamma\le 1$ (which is equivalent to $\beta\ge 1$ and 
$\alpha-\beta\ge -1$).
Moreover, the discriminant $4c_1c_3-c_2^2\ge 0$ is equivalent to
$$
  4(2\gamma-1)(1-\gamma)(\overline v_{i+1}\overline v_i)^{\gamma-2}
	(\overline v_{i+1}^\gamma-\overline v_i^\gamma)^2 \ge 0,
$$
which also holds true for $1/2\le\gamma\le 1$. This shows that $G''(\tau)\le 0$
and consequently,
$$
  F(v)-F(\overline v) = G(\tau)=G(0)+\tau G'(0)+\frac{\tau^2}{2}G''(\xi)
	\le \tau G'(0).
$$

It remains to compute $G'(0)$. Inserting the definition of $a_i$, we find that
\begin{align*}
  G'(0) &= 2\gamma h^{-1}\sum_{i=1}^N(v_{i+1}^\gamma-v_i^\gamma)
	(v_{i+1}^{\gamma-1}a_{i+1}-v_i^{\gamma-1}a_i) \\
	&= -2\gamma h^{-1}\sum_{i=1}^N v_i^{\gamma-1}a_i
	(v_{i+1}^{\gamma}-2v_i^{\gamma}+v_{i-1}^\gamma) \\
	&= -2\alpha\gamma h^{-3}\sum_{i=1}^N v_i^{(\alpha+\beta-3)/(2\alpha)}
	(v_{i+1}^{\beta/\alpha} - 2v_i^{\beta/\alpha} + v_{i-1}^{\beta/\alpha})
	(v_{i+1}^{\gamma}-2v_i^{\gamma}+v_{i-1}^\gamma),
\end{align*}
since $v_i^{\gamma-1}v_i^{(\alpha-1)/\alpha}=v_i^{(\alpha+\beta-3)/(2\alpha)}$.

We apply Lemma \ref{lem.ineqd} with $w_i=v_i^{\gamma}$, 
$A=2\beta/(\alpha+\beta-1)$ and $B=(\alpha+\beta-3)/(\alpha+\beta-1)$ and
infer that 
$$
  G'(0) \le -2\alpha\gamma\kappa h^{-3}\sum_{i=1}^N
	\min_{j=i,i\pm 1}v_j^{(\beta-1)/\alpha}
	(v_{i+1}^\gamma-2v_i^\gamma+v_{i-1}^\gamma)^2.
$$
By the discrete Poincar\'e-Wirtinger inequality (Lemma \ref{lem.PWI}), applied
with $z_i=v_{i+1}^\gamma-v_i^\gamma$, it follows that
\begin{align*}
  G'(0) &\le -2C_p^{-1}\alpha\gamma\kappa h^{-1}
	\min_{i=1,\ldots,N} v_i^{(\beta-1)/\alpha}
	\sum_{i=1}^N(v_{i+1}^\gamma-v_i^\gamma)^2 \\ 
	&= -2C_p^{-1}\alpha\gamma\kappa\min_{i=1,\ldots,N} 
	v_i^{(\beta-1)/\alpha}F(v),
\end{align*}
and hence, with $\kappa_0 = 2C_p^{-1}\alpha\gamma\kappa\min_{i=1,\ldots,N} 
v_i^{(\beta-1)/\alpha}$,
$$
  F(v) - F(\overline v) \le -\tau \kappa_0 F(v).
$$
This shows Assumption A2 of Proposition \ref{prop.BE} and, in particular,
after applying Gronwall's lemma, $\lim_{k\to\infty}F(v^k)=0$.

It remains to prove that Assumption A3, i.e.\ $\lim_{k\to\infty}H(v^k)=0$, holds.
We know that
$$
  v_i^k \le \sum_{j=1}^N v_j^k \le \sum_{j=1}^N v_j^0 < \infty,
$$
so, for any fixed $i=1,\ldots,N$, $(v_i^k)$ is bounded. Therefore, there
exists a sequence $k_j\to\infty$ such that $v_i^{k_j}\to y_i$ for some $y_i\ge 0$. 
By the discrete Poincar\'e-Wirtinger inequality (Lemma \ref{lem.PWI}), applied
to $z_i=(v_i^k)^\gamma - (V^k)^\gamma$, where 
$(V^k)^\gamma:=h\sum_{i=1}^{N}(v_i^k)^\gamma$, it follows that
$$
  \sum_{i=1}^N\big((v_i^k)^\gamma - (V^k)^\gamma\big)^2
	\le C_ph^{-2}\sum_{i=1}^N\big((v_{i+1}^k)^\gamma - (v_i^k)^\gamma\big)^2
	= C_ph^{-1} F(v^k).
$$
Since $\lim_{k\to\infty}F(v^k)=0$, we deduce that $(v_i^{k_j})$ and 
$V^{k_j}$ have the same limit, say $y:=y_i$. Set $U:=y^{1/\alpha}$.
This defines the entropy
$$
  \ent(u^k) = \frac{h}{\alpha-1}\sum_{i=1}^N\big((u_i^k)^\alpha - U^\alpha\big)
$$
for $u_i^k:=(v_i^k)^{1/\alpha}$. It holds that $\ent(u^{k_j})\to 0$ as $k_j\to\infty$.
But $k\mapsto \ent(u^k)$ is nonincreasing, from which we deduce that
$H(v^k)=\ent(u^k)\to 0$ for any sequence $k\to\infty$. 

According to Proposition \ref{prop.BE}, the discrete entropy converges
exponentially with decay rate
\begin{align*}
  \lambda 
	&= \frac{C_m}{C_M}\kappa_0 
	= \frac{4(\alpha-1)\beta}{\alpha+\beta-1}\frac{\kappa}{C_p}
	\min_{i=1,\ldots,N}u_i^{\beta-1} 
	= \frac{8\eps(\alpha-1)\beta^2}{C_p(\alpha+\beta-1)^2}
	\min_{i=1,\ldots,N}u_i^{\beta-1}.
\end{align*}

Next, we claim that the total mass $h\sum_{i=1}^N u_i^k$ is nondecreasing in $k$.
Indeed, by the concavity of $z\mapsto z^{1/\alpha}$ (recall that $\alpha > 1$), 
we have $y^{1/\alpha}-z^{1/\alpha}\ge (1/\alpha)y^{(1-\alpha)/\alpha}(y-z)$ 
for all $y$, $z\ge 0$ and hence,
$$
  \sum_{i=1}^N(u_i^k-u_i^{k-1}) 
	= \sum_{i=1}^N\big((v_i^k)^{1/\alpha}-(v_i^{k-1})^{1/\alpha}\big)
	\ge \frac{1}{\alpha}\sum_{i=1}^N(v_i^k)^{(1-\alpha)/\alpha}(v_i^k-v_i^{k-1}).
$$
Inserting scheme \eqref{1.scheme}, we find that
$$
  \sum_{i=1}^N(u_i^k-u_i^{k-1})
	\ge \frac{\tau}{h^2}\sum_{i=1}^N\big((v_{i+1}^k)^{\beta/\alpha} 
	- 2(v_i^k)^{\beta/\alpha} + (v_{i-1}^k)^{\beta/\alpha}\big) = 0,
$$
since $v_i^k$ satisfies periodic boundary conditions. This shows the claim.

The monotonicity of the total mass and the convergence property 
$h\sum_{i=1}^N u_i^{k_j}\to y^{1/\alpha} = U$ as $k_j\to\infty$ imply that
$h\sum_{i=1}^N u_i^k\to U$ for $k\to\infty$, and the convergence is monotone.
This finishes the proof.
\end{proof}


\section{Numerical examples}\label{sec.num}

We present some numerical results for the porous-medium equation discretized
in the previous section. As initial datum we choose the Barenblatt profile
$$
  u^0(x) = \frac{1}{t_0^{1/(\beta+1)}}\bigg(C-\frac{\beta-1}{2\beta}
	\frac{|x-x_0|^2}{t_0^{2/(\beta+1)}}\bigg)_+^{1/(\beta-1)}, 
$$
where $z_+=\max\{0,z\}$.
We consider two cases. For the slow diffusion case $\beta=4$, we choose
$x_0=0.5$, $t_0=10^{-4}$, and 
$$
  C = \frac{\beta-1}{2\beta}\frac{|x_0|^2}{(t_{\rm end}+t_0)^{2/(\beta+1)}},
	\quad t_{\rm end}=5\cdot 10^{-4}.
$$
The profile will hit the boundary of $\Omega=(0,1)$ at time $t_{\rm end}$.
For the fast diffusion case $\beta=0.5$, we take $x_0=0.5$, $t_0=10^{-2}$,
and $C=t_0^{(\beta-1)/(\beta+1)}$ such that the maximum of the initial
profile equals 1.

Figure \ref{fig.massincrease} illustrates the evolution of the total mass for 
$\alpha=2$, $\beta=0.5$ (left) and $\alpha=3$, $\beta=4$ (right). 
As predicted in Theorem \ref{thm.main}, the total mass is
indeed increasing in time. The mass defect scales well with both the time step $\tau$ 
and the grid size $h$, where the influence of $\tau$ is more prevalent.
 
\begin{figure}[ht]
\includegraphics[width=80mm]{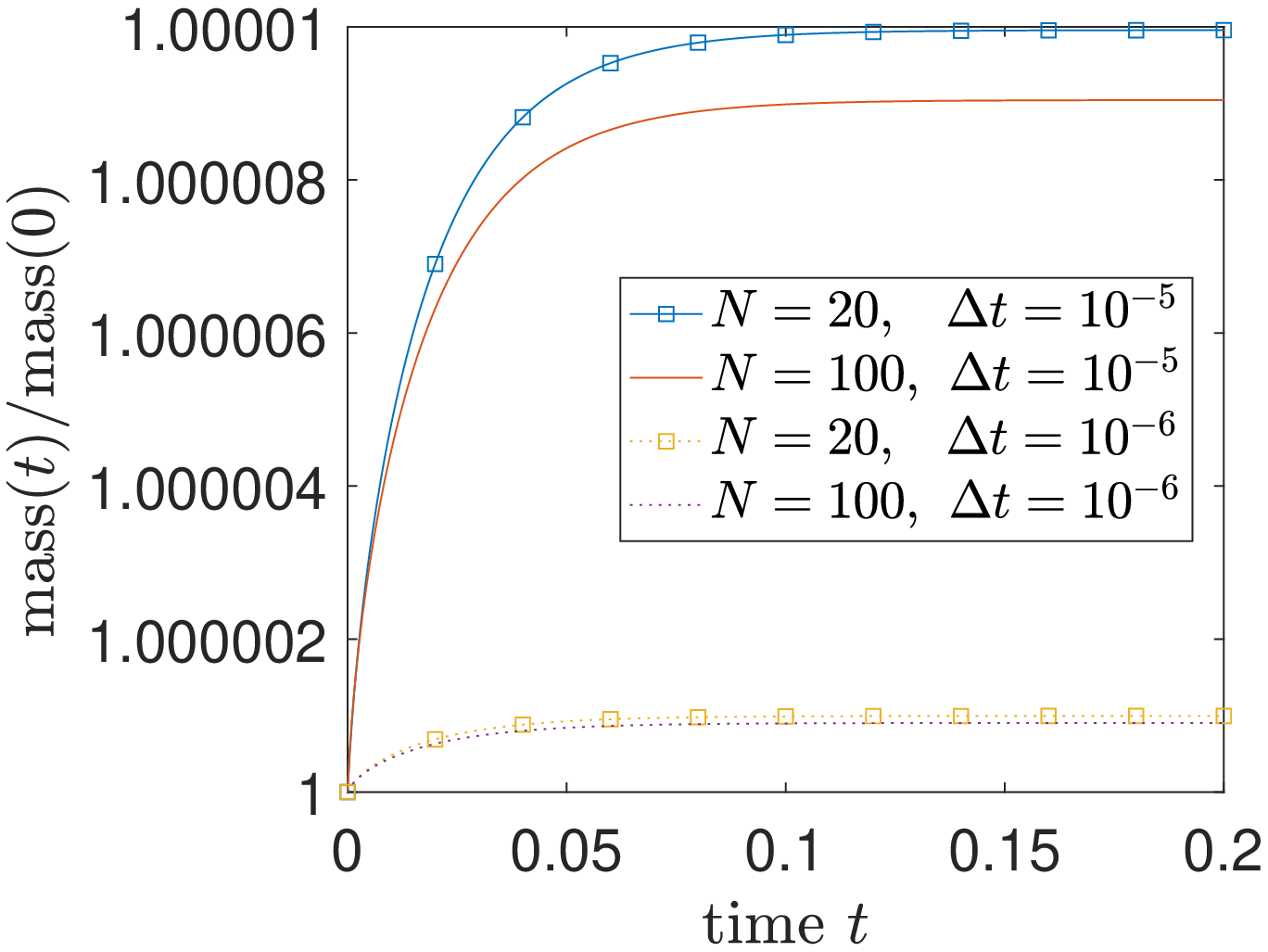}
\includegraphics[width=80mm]{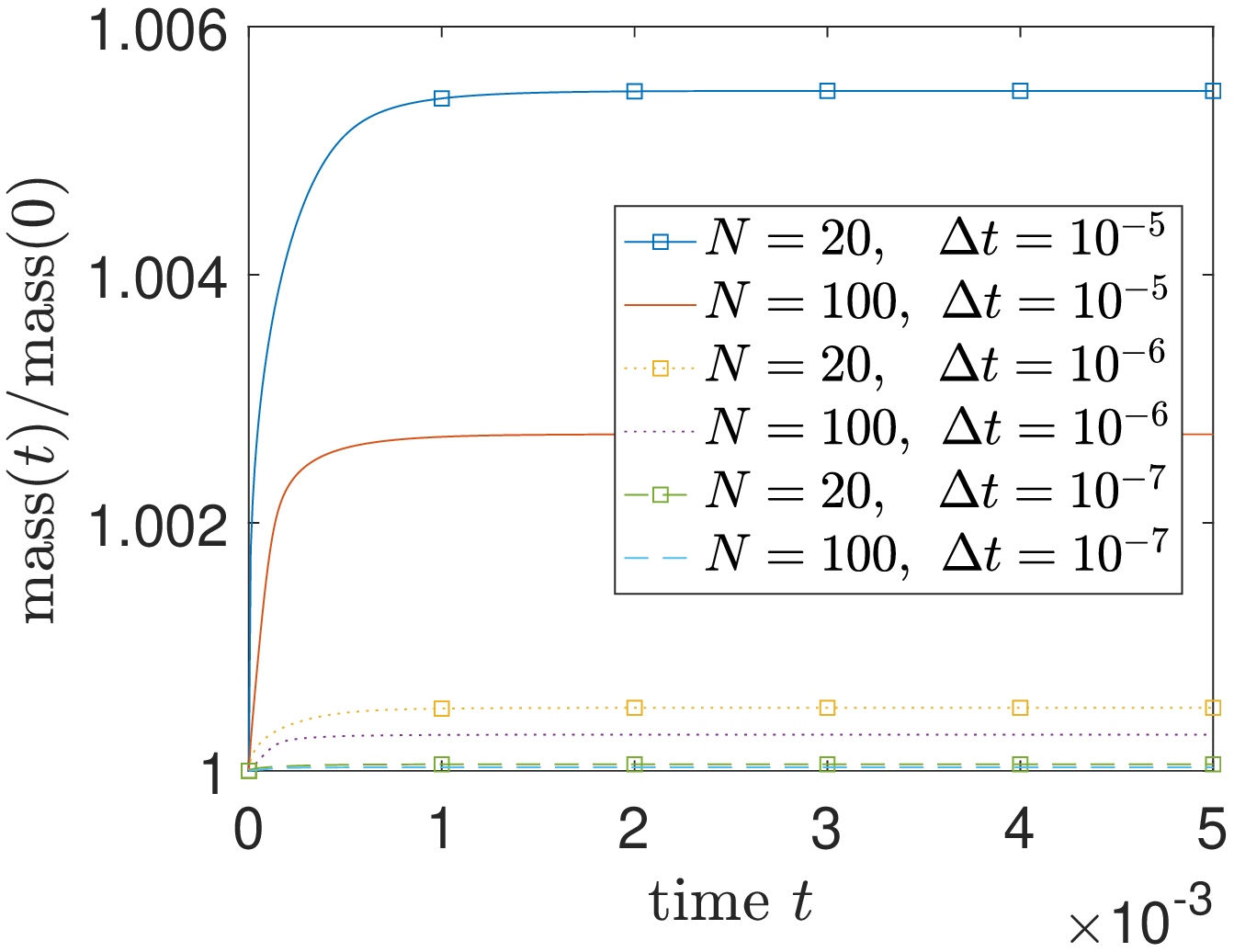}
\caption{Evolution of the total mass for two test scenarios
(left: $\alpha=2$, $\beta=0.5$, right: $\alpha=3$, $\beta=4$).}
\label{fig.massincrease}
\end{figure}

The time decay of the (relative) entropy $\ent$ is shown in Figure 
\ref{fig.relentropy.admissible}
for various space and time steps. We observe that the decay is indeed exponential. 
Here, the steady state $u_\infty$ (which is needed to define the relative entropy) is 
given by $u_\infty=h\sum_{i=0}^N u_i^{k_{\rm max}}$, where $k_{\rm max}$ is the
final time step. This choice clearly depends on the scheme since the mass 
is not conserved.
The relative entropy converges exponentially even when $(\alpha,\beta)$ 
is chosen outside of the admissible
region; see Figure \ref{fig.relentropy.not.admissible}. 

\begin{figure}[ht]
\includegraphics[width=80mm]{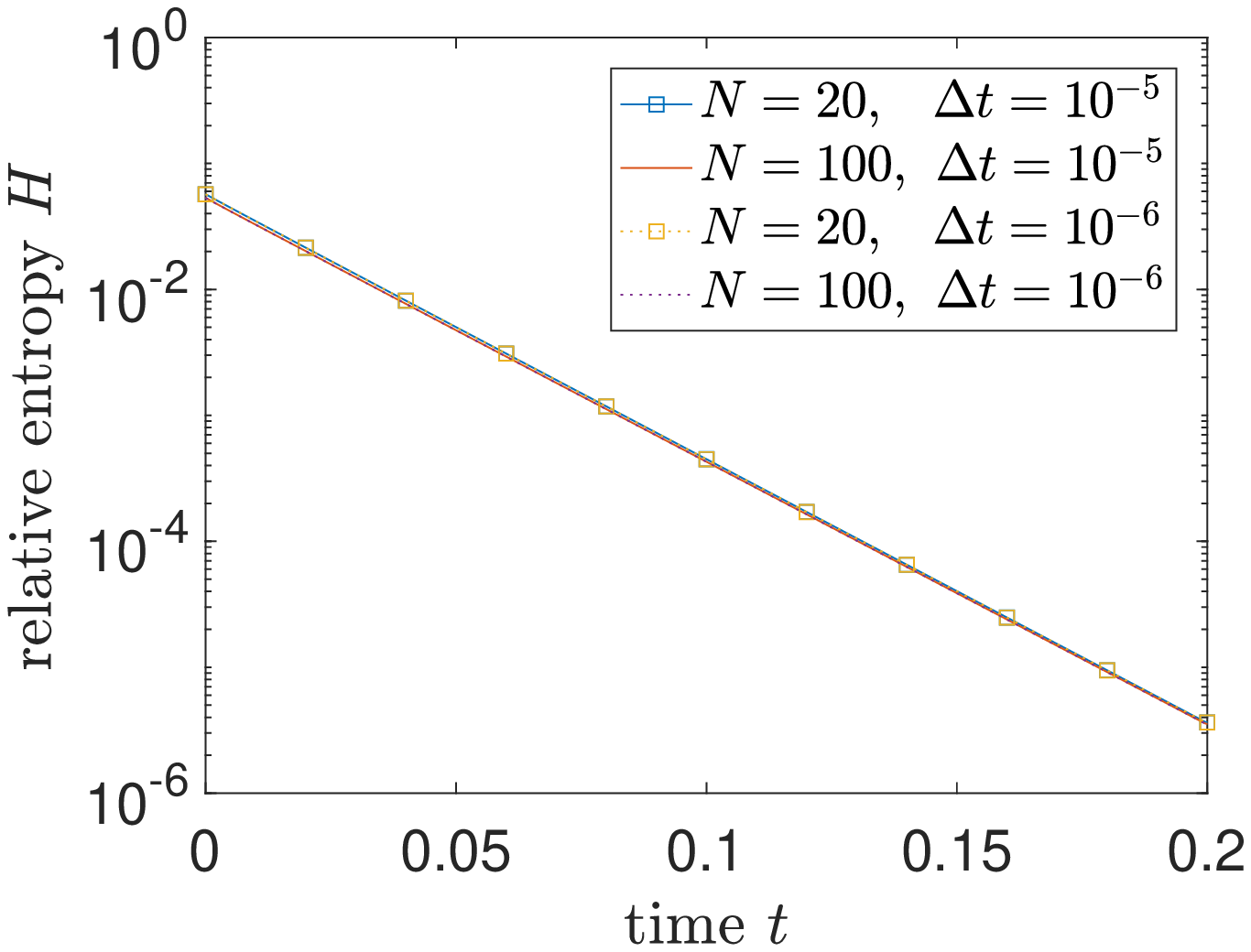}
\includegraphics[width=80mm]{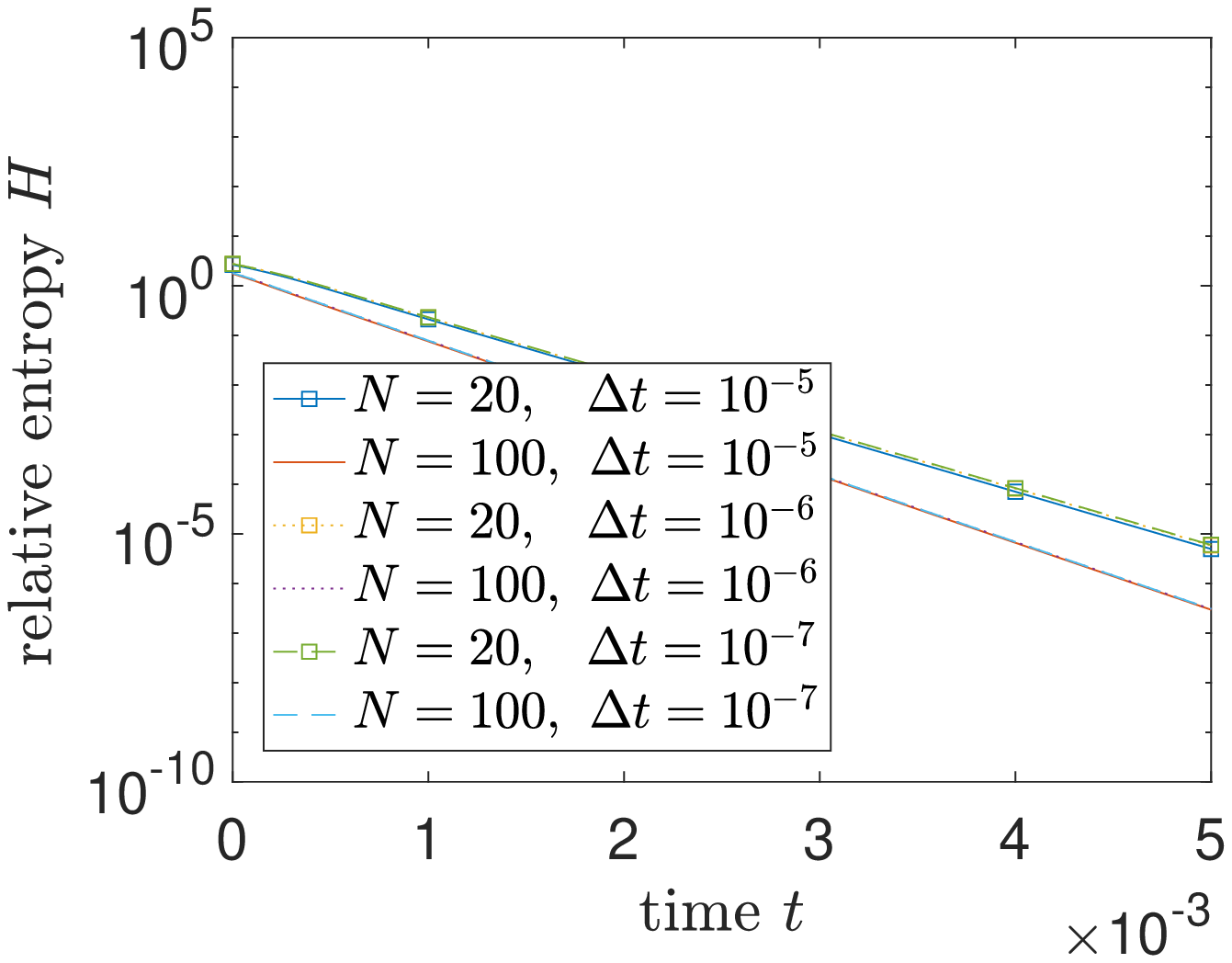}
\caption{Evolution of the relative entropy for two test scenarios
in the admissible region (left: $\alpha=2$, $\beta=0.5$, 
right: $\alpha=3$, $\beta=4$).}
\label{fig.relentropy.admissible}
\end{figure}

\begin{figure}[ht]
\includegraphics[width=90mm]{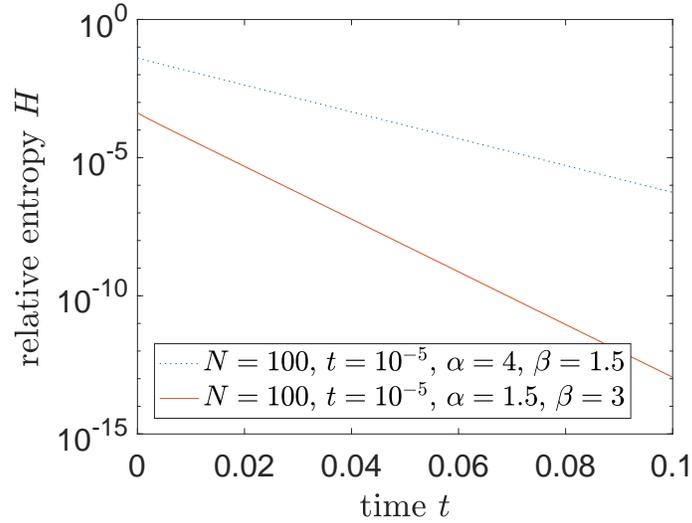}
\caption{Evolution of the relative entropies for $(\alpha,\beta)$ outside of the 
admissible region.}
\label{fig.relentropy.not.admissible}
\end{figure}


\begin{appendix}
\section{Auxiliary inequalities}

\begin{lemma}[Poincar\'e-Wirtinger inequality]\label{lem.poin}
Let $\mathrm{meas}(\T)=1$. 
It holds for all $v\in H^1(\T)$ satisfying $\int_\T udx=0$ that
$$
  \int_\T u^2 dx \le C_P\int_\T u_x^2 dx,
$$
and the constant $C_P=1/(4\pi^2)$ is sharp.
\end{lemma}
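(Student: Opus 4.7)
The plan is to prove this classical inequality by Fourier analysis on the one-dimensional torus. Since $\mathrm{meas}(\T)=1$, I identify $\T$ with $[0,1]$ equipped with periodic boundary conditions, so every $u\in H^1(\T)$ admits a Fourier expansion
$$u(x) = \sum_{n\in\mathbb{Z}} \hat u(n)\, e^{2\pi i n x}, \quad \hat u(n) = \int_\T u(x) e^{-2\pi i n x}\,dx.$$
The hypothesis $\int_\T u\,dx = 0$ is exactly the statement $\hat u(0)=0$, so the $n=0$ mode drops out of every sum below.

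Next I invoke Parseval's identity, which for $u\in L^2(\T)$ and $u_x\in L^2(\T)$ gives
$$\int_\T u^2\,dx = \sum_{n\in\mathbb{Z}} |\hat u(n)|^2, \qquad \int_\T u_x^2\,dx = \sum_{n\in\mathbb{Z}} (2\pi n)^2 |\hat u(n)|^2,$$
using that the Fourier coefficient of $u_x$ at frequency $n$ is $2\pi i n\,\hat u(n)$ (which is where $H^1$-regularity is needed). Since only the terms with $|n|\ge 1$ contribute, and $(2\pi n)^2 \ge 4\pi^2$ for those $n$, I obtain
$$\int_\T u^2\,dx = \sum_{|n|\ge 1} |\hat u(n)|^2 \le \sum_{|n|\ge 1} \frac{(2\pi n)^2}{4\pi^2}|\hat u(n)|^2 = \frac{1}{4\pi^2}\int_\T u_x^2\,dx,$$
which is the desired inequality with $C_P = 1/(4\pi^2)$.

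For sharpness, I would exhibit the extremizer $u_\ast(x)=\cos(2\pi x)$, which is mean-zero and satisfies $\int_\T u_\ast^2\,dx = 1/2$ and $\int_\T (u_\ast)_x^2\,dx = 2\pi^2$, so the ratio equals exactly $1/(4\pi^2)$, showing that no smaller constant works. There is no real obstacle in this proof; the only technical point is the justification of termwise differentiation of the Fourier series, which one handles either by first proving the bound for trigonometric polynomials (where the identity for $\widehat{u_x}$ is immediate) and extending by a density argument in $H^1(\T)$, or by directly invoking the standard characterization $u\in H^1(\T)\iff \sum_n(1+n^2)|\hat u(n)|^2<\infty$.
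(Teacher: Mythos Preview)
Your Fourier-series proof is correct and is the classical argument for this inequality. The paper itself does not supply a proof but simply cites \cite[p.~185]{HLP52}; the argument given there is essentially the same Fourier expansion you wrote down, so your approach matches the referenced one.
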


\begin{lemma}[Discrete Poincar\'e-Wirtinger inequality]\label{lem.PWI}
Let $N\in\N$, $h=1/N$, $z_0,\ldots,z_{N}\in\R$ satisfying $z_N=z_0$ and 
$\sum_{i=0}^N z_i=0$.
Then
$$
  h\sum_{i=0}^{N-1}z_i^2\le C_p h^{-1}\sum_{i=0}^{N-1}(z_{i+1}-z_i)^2,
$$
where $C_p=h^2/(4\sin^2(h\pi))\ge 1/(4\pi^2)$. This constant is sharp.
\end{lemma}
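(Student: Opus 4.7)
The plan is to prove the discrete Poincaré–Wirtinger inequality by discrete Fourier analysis on the cyclic group $\Z/N\Z$, exploiting the fact that the forward-difference operator commutes with the cyclic shift and is therefore simultaneously diagonalized by the characters $\chi_k(i)=e^{2\pi\mathrm{i}ki/N}$.

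First I would set $\omega=e^{2\pi\mathrm{i}/N}$ and, viewing $(z_0,\ldots,z_{N-1})$ as an $N$-periodic sequence (with $z_N=z_0$), expand $z_i=\sum_{k=0}^{N-1}\hat z_k\omega^{ik}$. The zero-mean hypothesis translates into $\hat z_0=0$. By Parseval's identity on $\Z/N\Z$,
$$
  \sum_{i=0}^{N-1}z_i^2=N\sum_{k=0}^{N-1}|\hat z_k|^2=N\sum_{k=1}^{N-1}|\hat z_k|^2.
$$
Since $z_{i+1}-z_i=\sum_k\hat z_k(\omega^k-1)\omega^{ik}$ and $|\omega^k-1|^2=4\sin^2(k\pi/N)$, a second application of Parseval yields
$$
  \sum_{i=0}^{N-1}(z_{i+1}-z_i)^2=N\sum_{k=1}^{N-1}4\sin^2(k\pi/N)|\hat z_k|^2.
$$

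Next I would compare these two expressions term by term. The factor $4\sin^2(k\pi/N)$ is minimized over $k\in\{1,\ldots,N-1\}$ at $k=1$ and $k=N-1$, both giving $4\sin^2(\pi/N)=4\sin^2(h\pi)$ since $h=1/N$. Hence
$$
  \sum_{i=0}^{N-1}z_i^2\le\frac{1}{4\sin^2(h\pi)}\sum_{i=0}^{N-1}(z_{i+1}-z_i)^2,
$$
and multiplying by $h$ and rewriting the right-hand side as $C_p h^{-1}\sum(z_{i+1}-z_i)^2$ with $C_p=h^2/(4\sin^2(h\pi))$ gives exactly the claim. Sharpness of $C_p$ is exhibited by the extremizer $z_i=\cos(2\pi i/N)$ (or its sine partner), which concentrates all Fourier mass on $k=\pm 1$ and saturates the bound.

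Finally, the lower bound $C_p\ge 1/(4\pi^2)$ reduces to $\sin(h\pi)\le h\pi$, which is the elementary inequality $\sin x\le x$ for $x\ge 0$. There is no real obstacle in this proof; the only care needed is bookkeeping of the factors of $h=1/N$ that convert Parseval's discrete identity into the stated inequality, and verifying that the mean-zero hypothesis indeed kills exactly the $k=0$ mode (which is the only mode for which $4\sin^2(k\pi/N)=0$, i.e.\ the only obstruction to the spectral gap).
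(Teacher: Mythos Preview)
Your proof is correct. The paper itself does not supply a proof but refers to Schoenberg \cite{Sch50}, whose argument (as the title ``The finite Fourier series and elementary geometry'' suggests) is precisely the discrete Fourier diagonalization you carry out; so your approach coincides with the cited one. One minor bookkeeping remark: the zero-mean hypothesis in the statement is written as $\sum_{i=0}^{N} z_i=0$, which with $z_N=z_0$ is not literally the condition $\hat z_0=0$; this is a typo in the paper (its applications use the standard condition $\sum_{i=0}^{N-1}z_i=0$), and you have interpreted it correctly.
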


These lemmas are stated in \cite[Theorem 1]{Shi73}; for proofs see 
\cite[p.~185]{HLP52} (Lemma \ref{lem.poin}) and \cite[Theorem 1]{Sch50}
(Lemma \ref{lem.PWI}).

\begin{lemma}\label{lem.xy}
Let $a$, $b>0$ and $x$, $y\ge 0$. Then
$$
  (x^a-y^a)(x^b-y^b) \le (x^{(a+b)/2}-y^{(a+b)/2})^2
	\le \frac{(a+b)^2}{4ab}(x^a-y^a)(x^b-y^b).
$$
\end{lemma}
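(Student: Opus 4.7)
The plan is to reduce to a one-variable inequality by homogeneity, then dispatch the two bounds separately: one via AM--GM, the other via Cauchy--Schwarz applied to integral representations.

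First I would dispose of the trivial cases. If $x=y$ or $\min(x,y)=0$, both inequalities either reduce to equalities or collapse to $0\le(\cdot)^2\le(\cdot)$ and require no argument. Otherwise, by the symmetry of every expression under $(x,y)\mapsto(y,x)$, assume $x>y>0$ and set $t:=x/y>1$. Dividing through by $y^{a+b}$, the chain becomes
\begin{equation*}
  (t^a-1)(t^b-1) \;\le\; \bigl(t^{(a+b)/2}-1\bigr)^2 \;\le\; \frac{(a+b)^2}{4ab}(t^a-1)(t^b-1),
\end{equation*}
and it now suffices to verify this for every $t>1$.

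For the left inequality, I would simply expand. Since $(t^{(a+b)/2}-1)^2 = t^{a+b}-2t^{(a+b)/2}+1$ and $(t^a-1)(t^b-1)=t^{a+b}-t^a-t^b+1$, the inequality is equivalent to $t^a+t^b\ge 2t^{(a+b)/2}$, which is precisely the AM--GM inequality applied to $t^a$ and $t^b$ (noting $\sqrt{t^a t^b}=t^{(a+b)/2}$).

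For the right inequality, which I expect to be the main step, I would use the integral representation $t^r-1=r\int_1^t s^{r-1}\,ds$ valid for $r>0$, $t>1$. Writing the exponent $(a+b)/2-1=\tfrac{a-1}{2}+\tfrac{b-1}{2}$, Cauchy--Schwarz on $L^2((1,t))$ gives
\begin{equation*}
  \left(\int_1^t s^{(a+b)/2-1}\,ds\right)^{\!2}
  = \left(\int_1^t s^{(a-1)/2}\,s^{(b-1)/2}\,ds\right)^{\!2}
  \le \int_1^t s^{a-1}\,ds\,\int_1^t s^{b-1}\,ds = \frac{(t^a-1)(t^b-1)}{ab}.
\end{equation*}
Multiplying by $((a+b)/2)^2$ yields $\bigl(t^{(a+b)/2}-1\bigr)^2\le\tfrac{(a+b)^2}{4ab}(t^a-1)(t^b-1)$, completing the proof. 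The only mild subtlety is that Cauchy--Schwarz gives the inequality in the correct direction precisely because the exponents $\tfrac{a-1}{2}$ and $\tfrac{b-1}{2}$ average to $(a+b)/2-1$; the constant $(a+b)^2/(4ab)$ arises automatically from the ratio of the prefactors $((a+b)/2)^2$ and $ab$.
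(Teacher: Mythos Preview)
Your proof is correct. For the left inequality your reduction and AM--GM step coincide with the paper's argument: after the same substitution $z=x/y$ the paper rewrites the inequality as $(z^{a/2}-z^{b/2})^2\ge 0$, which is exactly your $t^a+t^b\ge 2t^{(a+b)/2}$ in squared form.

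For the right inequality the paper does not give an argument at all but simply cites \cite[Lemma A.3]{CJS16}. Your integral-representation-plus-Cauchy--Schwarz proof is a clean, self-contained alternative; it has the advantage of making the origin of the constant $(a+b)^2/(4ab)$ transparent (it is the ratio of the squared prefactor $((a+b)/2)^2$ to the product $ab$ coming from the three integral formulas), and it shows at once that equality holds precisely when $a=b$, since that is the equality case in Cauchy--Schwarz for the functions $s^{(a-1)/2}$ and $s^{(b-1)/2}$.
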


\begin{proof}
The second inequality is proven in \cite[Lemma A.3]{CJS16}. For the proof of
the first inequality, we divide it by $y^{a+b}$ and set $z=x/y$. Then
the inequality is equivalent to
$$
  (z^a-1)(z^b-1) \le (z^{(a+b)/2}-1)^2,
$$
which after expansion can be equivalently written as
$(z^{a/2}-z^{b/2})^2\ge 0$, and this is true.
\end{proof}

\end{appendix}


\end{document}